\newcommand{\p}{\partial}
\newcommand{\e}{\varepsilon}
\newcommand{\R}{{\mathbb R}}
\newcommand{\T}{{\mathbb T}}
\newcommand{\N}{{\mathbb N}}
\newcommand{\al}{{\alpha}}
\newcommand{\FF}{{\cal F}}
\newcommand{\KK}{{\cal K}}
\newcommand{\RR}{{\cal R}}
\newcommand{\ph}{\varphi}
\newcommand{\Z}{ \mathbb{Z}}
\newcommand{\dd}{{\textup d}}
\newcommand{\const}{\mathop{\rm const}\nolimits}
\theoremstyle{plain}
\newtheorem{theorem}{Theorem}[section]
\newtheorem*{main theorem}{Main theorem}
\newtheorem{lemma}[theorem]{Lemma}
\newtheorem{proposition}[theorem]{Proposition}
\newtheorem{definition}[theorem]{Definition}
\theoremstyle{remark}
\newtheorem{example}[theorem]{Example}
\newcommand{\de}{\delta}
\numberwithin{equation}{section}
\begin{document}

\author{Hayk Nersisyan}
\date{}
\title{ Controllability of the 3D compressible  Euler system}

 \maketitle\begin{center}

CNRS UMR8088, D\'epartement de Math\'ematiques\\
Universit\'e de Cergy--Pontoise, Site de Saint-Martin\\
            2 avenue Adolphe Chauvin\\
       95302 Cergy--Pontoise Cedex, France\\
       E-mail: Hayk.Nersisyan@u-cergy.fr
 \end{center}

\vspace{15 pt}

 {\small\textbf{Abstract.} The paper is devoted to the
 controllability problem for 3D compressible Euler system. The control is a finite-dimensional external force acting only on the velocity equation. We show
 that the velocity and density of the fluid are simultaneously
 controllable. In particular, the system is approximately controllable and exactly controllable in projections.
  }\\\\
 \tableofcontents
 \newpage
 \section{Introduction}

The time evolution of an isentropic ideal gas is described by the
compressible Euler system
\begin{align}
\rho(\p_t{\bm{u}}+(\bm{u}\cdot\nabla)\bm{u})+ \nabla p(\rho)&=\rho\bm{f}, \label{1:E.I1}\\
\p_t\rho+\nabla\cdot(\rho \bm{u})&=0,\label{1:E.I2}\\
\bm{u}(0)=\bm{u}_0, \quad \rho(0)&=\rho_0,\label{1:E.I3}
\end{align}
where $\bm{u}= (u_1,u_2,u_3)$ and $\rho$  are unknown velocity
field and density of the gas, $p$ is the pressure and  $\bm{f}$ is
the external force, $\bm{u}_0$ and $\rho_0$ are the initial
conditions.   We assume that the space variable $\bm
x=(x_1,x_2,x_3)$ belongs to the 3D torus $\T^3=\R^3/2\pi\Z^3$.

Problem  (\ref{1:E.I1})-(\ref{1:E.I3}) can be reduced by a simple
change of variables to a quasi-linear   symmetrizable hyperbolic
system. Thus local-in-time existence   and uniqueness of a smooth
solution is well known (for instance, see \cite{Kato1, tey1}).
  Moreover, a blow-up criterion  holds for the
compressible Euler equation (see \cite[Section 16, Proposition
2.4]{tey1}).

The aim of this paper is the study of some controllability issues
for  system (\ref{1:E.I1})-(\ref{1:E.I3}). We suppose that the
external force is of the form $\bm f=\bm{\tilde f}+\bm\eta$, where
$\bm{\tilde f}$ is any given function and $\bm \eta$ is the
control taking values in a finite-dimensional space.   Let $  H^k$
be the Sobolev space of order $k$ on $\T^3$ and let $\bm H^k$   be
the space of vector functions $\bm u = (u_1, u_2, u_3)$ with
components in $H^k$. For both spaces, we denote by $\|\cdot\|_k$
the corresponding norms. We denote $J_T:=[0, T ]$.  The following
theorem is our main result.
\begin{main theorem}\label{1:T.Int}   Let $ k\ge4$ and $\bm{\tilde f}\in C^\infty(J_T,\bm{H}^{k+2})$.
 There is a
finite-dimensional space $\bm E\subset \bm H^k$    with $\dim \bm
E=45$  such that for any constants $T,\e>0$, for any continuous
function $\bm F:\bm H^k\times H^k\rightarrow \R^N$ admitting a
right  inverse, for any functions $\bm u_0, \hat {\bm u}\in \bm
H^k$ and $ \rho_0,\hat \rho\in H^k$ with
\begin{align}\label{1:E.Erho}
\int_{\T^3}\rho_0\dd\bm x=\int_{\T^3}\hat\rho\dd\bm x
\end{align}
 there is a   smooth control $\bm \eta: J_T\rightarrow \bm E$  such that system
(\ref{1:E.I1})-(\ref{1:E.I3}) has a unique  regular solution $(\bm
u, \rho)$, which  verifies
\begin{align}
&\|(\bm u(T), \rho(T)) -(\hat{\bm u}, \hat{\rho})\|_{\bm H^k\times
H^k}<\e,\nonumber\\& \bm F(\bm u(T), \rho(T))=\bm F(\hat{\bm u},
\hat{\rho}).\nonumber
\end{align}
\end{main theorem}
See Subsection \ref{1:subsect3.1} for the exact formulation. We
stress that condition (\ref{1:E.Erho}) is essential, because
integrating (\ref{1:E.I2}), we get  $\int\rho(\cdot,\bm x)\dd \bm
x=\const$.

Before turning to the ideas of the proof, let us describe in a few
words some previous results on the controllability of Euler and
Navier--Stokes systems. Li and Rao \cite {LiRao} proved a local
exact boundary controllability property for general 1D first-order
quasi-linear  hyperbolic equations. Exact boundary controllability
problems for weak entropy solutions of 1D compressible  Euler
system has been established by Glass \cite{Gla2}. Controllability
of incompressible Euler and Navier--Stokes systems has been
studied by several authors. Coron \cite {cor} introduced the
return method to show exact boundary controllability of 2D
incompressible Euler system. Glass  \cite {gla} generalized this
result for 3D Euler system.
    Exact controllability of
  Navier--Stokes systems with  control supported by a
given domain was studied by Coron and Fursikov \cite{corfurs},
Fursikov and Imanuvilov \cite{fuim}, Imanuvilov \cite{ima2001},
Fern\'{a}ndez-Cara
 et al. \cite{fpgi}. Agrachev and Sarychev \cite{agr1, agr2}
proved controllability of 2D Navier--Stokes and 2D Euler equations
with finite-dimensional external control. Rodrigues \cite{rod}
used Agrachev--Sarychev method  to prove controllability of the 2D
Navier--Stokes equation on the rectangle with Lions boundary
condition. Shirikyan  \cite{shi1, shi2} generalized this method to
the case of 3D Navier--Stokes equation. Furthermore, he shows
\cite{shi3} that 2D Euler equation is not not exactly controllable
by a finite-dimensional external force. In \cite{hn}, we show that
in the case of 3D Euler equation, the velocity and pressure are
exactly controllable in projections.

One of the main difficulties of the proof of Main theorem
 is the fact that the control $\bm \eta$ acts only on
the first equation. We combine the Agrachev--Sarychev method with
a perturbative result for compressible Euler equations and a
property of the transport equation to prove that   the  velocity
$\bm u$ and the density $\rho$  can be controlled simultaneously
with the help of a finite-dimensional external force $\bm \eta$.
   The Agrachev--Sarychev method is based on construction of an
increasing sequence of finite-dimensional spaces $\bm E_n\subset
\bm H^k, n\ge 0$ such that
\begin{enumerate}
\item[(i)] The
system is controllable with $\bm E_N$-valued controls for some
$N\ge 1$.
\item[(ii)]Controllability of the system with $\bm
\eta\in \bm E_n$ is equivalent to that  with $\bm \eta\in \bm
E_{n+1}$.
\end{enumerate}
As in the case of incompressible Euler and Navier--Stokes systems,
the proof of property $(i)$ is deduced from the hypothesis that
$\bm E_\infty:=\cup_{n=0}^\infty \bm E_n$ is dense in $\bm H^k$
and from the fact that for any functions $\bm V_0$, $\bm V_1$
there is a control (not necessarily $\bm E$-valued)  which steers
the system from  $\bm V_0$ to $\bm V_1$. As the control acts only
on the first equation, along with  (\ref{1:E.I1})-(\ref{1:E.I2})
we need to consider  the control system
\begin{align}
\rho(\p_t{\bm{u}}+((\bm{u}+\bm \xi )\cdot\nabla)(\bm{u}+\bm \xi ))+ \nabla p(\rho)&=\rho(\bm{\tilde f} +\bm \eta), \label{1:E.I11}\\
\p_t\rho+\nabla\cdot(\rho (\bm{u}+\bm \xi ))&=0.\label{1:E.I21}
\end{align}
For any $\bm V_0$ and $\bm V_1$ we find  controls  $\bm \xi, \bm
\eta $ such that the solution of (\ref{1:E.I11})-(\ref{1:E.I21})
links $\bm V_0$ and $\bm V_1$. Now to prove $(i)$, it suffices to
show that the control systems (\ref{1:E.I1})-(\ref{1:E.I2})  and
(\ref{1:E.I11})-(\ref{1:E.I21}) are equivalent. This can be done
by a simple change of the variable $\bm v=\bm{u}+\bm \xi$. To
establish property $(ii)$, we first show that the controllability
of (\ref{1:E.I1})-(\ref{1:E.I2}) with $\bm \eta\in \bm E_{n+1}$ is
equivalent to that of the system
\begin{align}
\rho(\p_t{\bm{u}}+((\bm{u}+\bm \xi )\cdot\nabla)(\bm{u}+\bm \xi ))+ \nabla p(\rho)&=\rho(\bm{\tilde f} +\bm \eta), \label{1:E.I12}\\
\p_t\rho+\nabla\cdot(\rho  \bm{u} )&=0\label{1:E.I22}
\end{align}
with $\bm \eta\in \bm E_{n }$ and $\bm \xi \in \bm E_{n}$. Here we
use the ideas from \cite{agr1, agr2, shi1,shi2,hn}. Then using a
continuity property of the resolving operator of compressible
Euler system (see Theorem \ref{1:T.te2}), we show that control
systems (\ref{1:E.I12})-(\ref{1:E.I22})  and
(\ref{1:E.I11})-(\ref{1:E.I21}) are also equivalent. We refer the
reader
to Section \ref {1:S.4.2} for a detailed proof of this property.\\

\textbf{Acknowledgments.}   The author would like to express deep
gratitude to Armen Shirikyan for drawing his attention to this
problem and  for many valuable discussions   and to the referees
for their detailed   comments and suggestions which have helped to
improve the paper.
\\
\newline

\textbf{Notation.}   We use   bold characters to denote vector
functions.  Let $X$ be a Banach  space endowed with the norm
$\|\cdot\|_X$. For $1\leq p<\infty$  let $L^p(J_T,X)$ be the space
of measurable functions $u: J_T \rightarrow X$ such that
\begin{equation}
\|u\|_{L^p(J_T,X)}:=\bigg(\int_{0}^T \|u\|_X^p\dd s
\bigg)^{\frac{1}{p}}<\infty.\nonumber
\end{equation}
   The space of continuous functions $u: J_T
\rightarrow X$ is denoted by $C(J_T,X)$. We denote by $C$ a
constant whose value may change from line to line. We write $\int
\! f(x) \dd x$ instead of   $\int_{\T^3} f(x) \dd x$.   Let
$\delta_{i,j}$ be the Kronecker delta, i.e, $\delta_{i,j}=0$ if $i
\neq j$ and $\delta_{i,i}=1$.

\section{Preliminaries on 3D  compressible Euler system}\label{1:S:1}
   \subsection{ Symmetrizable hyperbolic systems } In this
subsection, we recall some results on local existence  of
symmetrizable hyperbolic systems. Let us consider the system
  \begin{align}\label{1:E:symmmet:1}
\p_t{\bm{v}}&+\sum_{i=1}^n\bm A_i(t,\bm x, \bm v)\p_i \bm v+\bm
G(t,\bm x, \bm v)=0, &\bm v(0)= \bm v_0 .
\end{align}
We say that (\ref{1:E:symmmet:1}) is a quasi-linear  symmetric
hyperbolic system if matrices $A_i$ are symmetric, i.e., $
A_i=A_i^*$. If  functions $\bm A_i,\bm G$ are smooth and system
(\ref{1:E:symmmet:1}) is  symmetric hyperbolic, then for any $\bm
v_0\in \bm H^k$, $k> n/2 +1$ there exists $T>0$ such that system
(\ref{1:E:symmmet:1}) has a   solution $\bm v \in
C(J_{T},\bm{H}^{k})$ (see \cite{Kato1} or \cite[Chapter 16]{tey1}
for an exact statement). Now   consider a more general case:
 \begin{align}\label{1:E:symmmet:2}
\p_t{\bm{u}}&+\sum_{i=1}^n\bm B_i(t,\bm x, \bm u)\p_i \bm u+\bm
H(t,\bm x, \bm u)=0, &\bm u(0)= \bm u_0,
\end{align}
where $\bm B_i $ are such that there exists a positive definite
matrix  $\bm B_0$ such that  $\bm B_0\cdot~\!\!\bm B_i$ are
symmetric.
 These systems are called quasi-linear symmetrizable hyperbolic systems.
 As it is remarked in \cite[Chapter 16, p. 366]{tey1}, we have the
 following local well-posedness of this system.
\begin{theorem}\label{T:symeths}
Let $\bm u_0\in \bm H^k$, $k>n/2 +1$  and $\bm B_i,\bm H \in L^2
(J_T,\bm H^k\times\bm H^k)$. Then there exists $T_0>0$, which
depends on
$$\|\bm u_0\|_k+\|\bm B_i\|_{L^2(J_T,\bm H^k\times\bm
H^k)}+\|\bm H\|_{L^2 (J_T,\bm H^k\times\bm H^k)},$$ such that
system (\ref{1:E:symmmet:1}) has a unique solution $\bm u\in
C(J_{T_0},\bm{H}^{k})$.
\end{theorem}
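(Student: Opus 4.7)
The plan is to reduce the symmetrizable system to a weighted symmetric one and then apply the Kato scheme cited just above for (\ref{1:E:symmmet:1}). Multiplying (\ref{1:E:symmmet:2}) on the left by the positive definite matrix $\bm B_0=\bm B_0(t,\bm x,\bm u)$ yields the equivalent system
$$\bm B_0\,\p_t\bm u+\sum_{i=1}^n (\bm B_0\bm B_i)\,\p_i\bm u+\bm B_0\bm H=0,$$
whose spatial matrices $\bm B_0\bm B_i$ are symmetric by hypothesis and in which $\bm B_0$ is a strictly positive symmetric weight on $\p_t\bm u$. This is the canonical form to which the $H^k$ energy method applies.

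I would then construct the solution by Picard iteration: starting from $\bm u^{(0)}\equiv\bm u_0$, define $\bm u^{(m+1)}$ as the solution of the \emph{linear} symmetric hyperbolic problem obtained by freezing the coefficients at $\bm u^{(m)}$,
$$\bm B_0^{(m)}\,\p_t\bm u^{(m+1)}+\sum_i(\bm B_0\bm B_i)^{(m)}\,\p_i\bm u^{(m+1)}+\bm B_0^{(m)}\bm H^{(m)}=0,\qquad \bm u^{(m+1)}(0)=\bm u_0.$$
The linear theory recalled just above gives $\bm u^{(m+1)}\in C(J_T,\bm H^k)$. The crux is an $H^k$ energy estimate uniform in $m$ on a short interval: applying $\p^\alpha$ for $|\alpha|\le k$, pairing with $\p^\alpha\bm u^{(m+1)}$ in $L^2$ and using symmetry of $\bm B_0\bm B_i$ to integrate by parts, the top-order terms cancel up to the commutators $[\p^\alpha,\bm B_0(\bm u^{(m)})]\p_t\bm u^{(m+1)}$ and $[\p^\alpha,(\bm B_0\bm B_i)(\bm u^{(m)})]\p_i\bm u^{(m+1)}$, which are controlled by Moser/Kato--Ponce inequalities. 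These rely precisely on the Sobolev embedding $\bm H^k\hookrightarrow L^\infty$ valid for $k>n/2+1$. One thereby extracts a common time $T_0>0$, depending only on $\|\bm u_0\|_k$ and on the norms of $\bm B_i,\bm H$ stated in the theorem, on which all iterates remain in a fixed ball of $C(J_{T_0},\bm H^k)$.

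Once uniform boundedness is in hand, subtracting consecutive equations and running a weighted $L^2$ energy estimate shows that $(\bm u^{(m)})$ is Cauchy in $C(J_{T_0},\bm L^2)$. Interpolation with the uniform $\bm H^k$ bound upgrades this to strong convergence in $C(J_{T_0},\bm H^{k-1})$, which is sufficient to pass to the limit in the quasi-linear equation, while weak-$*$ compactness places the limit $\bm u$ in $L^\infty(J_{T_0},\bm H^k)$. Continuity into $\bm H^k$ is then recovered from the equation together with right continuity at $t=0$, obtained by proving $\limsup_{t\to 0^+}\|\bm u(t)\|_k\le\|\bm u_0\|_k$ via the (now nonlinear) energy identity and combining with weak continuity. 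Uniqueness follows from a weighted $L^2$ estimate on the difference of two solutions: if $\bm w=\bm u-\tilde{\bm u}$ satisfies a symmetric hyperbolic equation with bounded coefficients, then
$$\frac{\dd}{\dd t}\int \langle \bm B_0\bm w,\bm w\rangle\,\dd\bm x\le C(t)\,\|\bm w\|_0^2,$$
and Gronwall's lemma forces $\bm w\equiv 0$.

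The main obstacle is closing the $H^k$ commutator estimate uniformly in $m$ on a positive time interval: it is what fixes the lifetime $T_0$ and is where the threshold $k>n/2+1$ is indispensable, both for the algebra/composition estimates on $\bm B_0(\bm u^{(m)})$ and $\bm B_0\bm B_i(\bm u^{(m)})$ and for bounding top-order commutators by products involving $\|\p\bm u^{(m)}\|_{L^\infty}$. Once this estimate is secured, every subsequent step (Cauchy property, limit passage, continuity in time, uniqueness) is a standard consequence of the symmetric structure.
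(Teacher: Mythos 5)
The paper does not actually prove Theorem~\ref{T:symeths}: it records it as a known fact, citing Taylor \cite[Chapter 16, p.~366]{tey1} (and Kato \cite{Kato1}) for the underlying theory, and then moves on to use it. Your proposal is a correct reconstruction of the standard Kato--Taylor argument that those references employ: symmetrize by multiplying through by the positive definite weight $\bm B_0$, set up a Picard iteration with coefficients frozen at the previous iterate, establish a uniform-in-$m$ short-time $\bm H^k$ bound via the weighted energy identity plus Moser/Kato--Ponce commutator estimates (this is where $k>n/2+1$ and $\bm H^k\hookrightarrow L^\infty$ enter, and what determines the existence time $T_0$ in terms of $\|\bm u_0\|_k$, $\|\bm B_i\|$, $\|\bm H\|$), show the iterates are Cauchy in a weaker norm, pass to the limit, recover strong continuity into $\bm H^k$ from weak continuity plus the a priori estimate at $t=0$, and prove uniqueness by an $L^2$ Gronwall argument on the difference of two solutions. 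The details you sketch (use of the weighted inner product $\langle\bm B_0\bm w,\bm w\rangle$ for the energy, interpolation to upgrade $C(J_{T_0},\bm L^2)$ convergence, right-continuity at $t=0$) are all in order and match the cited source. So there is nothing to contrast against in the paper itself; your proof supplies the argument the paper chose to leave implicit, and does so correctly.
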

\subsection{ Well-posedness of the Euler equations}

Let us consider the compressible Euler system
\begin{align}
\rho(\p_t{\bm{u}}+(\bm{u}\cdot\nabla)\bm{u})+ \nabla p(\rho)&=\rho\bm{f}, \nonumber\\
\p_t\rho+\nabla\cdot(\rho \bm{u})&=0,\nonumber\\
\bm{u}(0)=\bm{u}_0, \quad \rho(0)&=\rho_0.\nonumber
\end{align}
 We
study the case in which there is no vacuum,  so that  the initial
density is separated from zero.  Let us show that in this case the
above problem can be reduced to a quasi-linear symmetrizable
hyperbolic system.  Setting $g=\log\rho$ and $h(s)=p'(e^{s})$, the
above system takes the equivalent form
\begin{align}
\p_t{\bm{u}}+( \bm{u} \cdot\nabla) \bm{u}  + h(g)\nabla g&=\bm{f}, \label{1:E:Eulersym1}\\
(\p_t+ \bm{u} \cdot\nabla)g+\nabla\cdot  \bm{u} &=0,\label{1:E:Eulersym2}\\
\bm{u}(0)=\bm{u}_0, \quad g(0)&=g_0.\label{1:E:Eulersym3}
\end{align}
In what follows, we shall deal with the more general system
\begin{align}
\p_t{\bm{u}}+((\bm{u}+\bm{\zeta})\cdot\nabla)(\bm{u}+\bm{\zeta})+ h(g)\nabla g&=\bm{f}, \label{1:E0:eul11}\\
(\p_t+(\bm{u}+\bm{\xi})\cdot\nabla)g+\nabla\cdot (\bm{u}+\bm{\xi})&=0,\label{1:E0:eul21}\\
\bm{u}(0)=\bm{u}_0, \quad g(0)&=g_0.\label{1:E0:eulic1}
\end{align}
We set $\bm U=(\bm{u}_{0}, g_{0}, \bm{\zeta}, \bm{\xi},\bm f)$,
\begin{align}\bm Y^k&= C(J_T,\bm{H}^k)\times C(J_T,H^k), \nonumber\\ \bm X^k&= \bm H^k\times H^k
\times L^2(J_T,\bm{H}^{k+1})\times L^2(J_T,\bm{H}^{k+1})\times
L^2(J_T,\bm{H}^{k}),\nonumber\end{align} and endow these spaces
with natural norms. Standard arguments show that if $k\ge4$, then
for any $\bm U\in \bm X^k$ problem
(\ref{1:E0:eul11})-(\ref{1:E0:eulic1})  has at most one solution
$(\bm u, g)\in\bm Y^k $. The following theorem establishes a
perturbative result on the existence of solution and some
continuity properties of the resolving operator.

\begin{theorem}\label{1:T:pert}
 Let $T>0$, $k\ge 4$ and $h\in C^k(\R)$ be such that $0<h(s)$ for any $s\in \R$.
 Suppose that for some  function   $\bm U_1 \in\bm X^k $
problem (\ref{1:E0:eul11})-(\ref{1:E0:eulic1})  has a solution
$(\bm{u}_1,g_1)\in  \bm Y^k$. Then  there are constants $\delta>0$
and $C>0$ depending only on $h$ and $ \|\bm U_1\|_{\bm X^k}$ such
that the following assertions hold.
\begin{enumerate}
\item[(i)] If $\bm{U}_{2}\in \bm{X}^{k}$
 satisfies the inequality
\begin{align}\label{1:E1:delt2}
\|\bm{U}_{1}-\bm{U}_{2}\|_{\bm{X}^{k}}< \delta,
\end{align}
then problem (\ref{1:E0:eul11})-(\ref{1:E0:eulic1}) has a unique
solution $ (\bm{u}_2,g_2)\in  \bm Y^k.$
\item[(ii)] Let
\begin{align}\RR:&\bm{X}^{k} \rightarrow  \bm Y^k\nonumber
\end{align} be the operator that takes a function
$\bm{U}_{2}$ satisfying (\ref{1:E1:delt2}) to the solution
$(\bm{u}_2,g_2)\in \bm Y^k$ of problem
(\ref{1:E0:eul11})-(\ref{1:E0:eulic1}). Then
\begin{align}\nonumber
&\|\RR(\bm{U}_{1})-\RR(\bm{U}_{2} )\|_{\bm Y^{k-1}}\le
C\|\bm{U}_{1}-\bm{U}_{2}\|_{\bm X ^{k-1}}.
\end{align}
\item[(iii)] The operator $\RR:\bm{X}^{k} \rightarrow  \bm Y^k$ is continuous at $\bm U_1$.
\end{enumerate}
\end{theorem}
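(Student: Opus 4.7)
The plan is to treat (\ref{1:E0:eul11})--(\ref{1:E0:eulic1}) as a quasi-linear symmetrizable hyperbolic system for the unknown $(\bm u, g)$ and to perform perturbation theory about the reference solution $(\bm u_1, g_1)$. After rearrangement, the principal part of the system reads
\[
\p_t \bm u + ((\bm u+\bm\zeta)\cdot\nabla)\bm u + h(g)\nabla g, \qquad \p_t g + ((\bm u+\bm\xi)\cdot\nabla)g + \nabla\cdot \bm u,
\]
and the symmetrizer $\diag(I_3, h(g))$ is positive definite because $h>0$, so Theorem \ref{T:symeths} supplies the necessary local existence theory in $C(J_{T_0}, \bm H^k \times H^k)$ for any data near $\bm U_1$.

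To prove (i), I would set $\bm w = \bm u_2 - \bm u_1$, $r = g_2 - g_1$ and subtract the two copies of (\ref{1:E0:eul11})--(\ref{1:E0:eul21}). The resulting system for $(\bm w, r)$ is again symmetrizable hyperbolic, with coefficients smoothly depending on $\bm u_1+\bm w, g_1+r, \bm\zeta_2, \bm\xi_2$ and a forcing whose $L^2(J_T, \bm H^k\times H^k)$ norm is bounded by $C\|\bm U_1 - \bm U_2\|_{\bm X^k}$, where $C$ depends only on $h$ and $\|\bm U_1\|_{\bm X^k}$. The standard $H^k$ energy estimate for symmetric hyperbolic systems, supplemented by Moser-type tame product and composition estimates to handle the nonlinearities $((\bm u+\bm\zeta)\cdot\nabla)(\bm u+\bm\zeta)$ and $h(g)\nabla g$, then yields, as long as $\|\bm w(t)\|_k+\|r(t)\|_k\le 1$,
\[
\tfrac{d}{dt}\bigl(\|\bm w(t)\|_k^2 + \|r(t)\|_k^2\bigr) \le C\bigl(\|\bm w(t)\|_k^2 + \|r(t)\|_k^2\bigr) + C\|\bm U_1 - \bm U_2\|_{\bm X^k}^2.
\]
Gronwall gives $\|\bm w(t)\|_k^2+\|r(t)\|_k^2 \le Ce^{CT}\|\bm U_1-\bm U_2\|_{\bm X^k}^2$, and choosing $\delta$ small closes a bootstrap keeping $(\bm u_2, g_2)$ in a small $\bm H^k\times H^k$ neighbourhood of $(\bm u_1, g_1)$; the blow-up criterion for compressible Euler then extends the local solution to all of $J_T$, proving (i).

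For (ii), I would rerun the same argument but estimate $(\bm w, r)$ in $\bm H^{k-1}$ rather than $\bm H^k$. This is the natural level at which the energy estimate closes for a difference of two solutions of a quasi-linear hyperbolic system: differentiating the equation once costs one order of regularity in the coefficients, which is available from part (i). The Gronwall step gives directly
\[
\|\bm w\|_{C(J_T, \bm H^{k-1})} + \|r\|_{C(J_T, H^{k-1})} \le C \|\bm U_1 - \bm U_2\|_{\bm X^{k-1}},
\]
with $C$ as in (i), which is exactly (ii).

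The main obstacle is (iii), continuity at $\bm U_1$ with target $\bm Y^k$, for which the Lipschitz bound of (ii) loses a derivative. My plan is a Bona--Smith type regularization argument. Given $\bm U_2^n \to \bm U_1$ in $\bm X^k$, (ii) already yields $\RR(\bm U_2^n) \to \RR(\bm U_1)$ in $\bm Y^{k-1}$ and (i) yields a uniform bound of $\RR(\bm U_2^n)$ in $L^\infty(J_T, \bm H^k\times H^k)$. To upgrade the mode of convergence, I would introduce smoothed data $\bm U_1^\e$ and $\bm U_2^{n,\e}$ via a mollifier of scale $\e$, establish tame $\bm H^{k+\alpha}$ estimates with explicit $\e$-dependence for the corresponding smoothed solutions, and pass to the limit $n\to\infty$ followed by $\e\to 0$. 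Equivalently, combining the weak convergence $\RR(\bm U_2^n)(t) \rightharpoonup \RR(\bm U_1)(t)$ in $\bm H^k\times H^k$ with uniform convergence of the energy quantity $\int (|\bm w|^2 + h(g) r^2)\,\dd x$ obtained from the symmetrized equation would yield strong convergence in $C(J_T, \bm H^k\times H^k)$. The delicate point is to show that no energy is lost along the approximation, which requires carefully tracking commutator errors when $k$ derivatives fall on the symmetrizer $h(g)$; this is the technical heart of the Bona--Smith method applied to quasi-linear hyperbolic systems.
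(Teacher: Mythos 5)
Your overall plan — perturb about $(\bm u_1,g_1)$, exploit the symmetrizer $\diag(I_3,h(g))$, run energy estimates on the difference, and use a Bona--Smith argument for the top-order continuity — is the same as the paper's, and parts (ii) and (iii) as you sketch them match the paper (which for (iii) refers to Beir\~ao da Veiga \cite{BDV}, a Bona--Smith-type argument).

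However, your step for (i) contains a genuine gap: the $H^k$ energy estimate for the difference $(\bm w,r)$ does \emph{not} close. Writing $\bm w=\bm u_2-\bm u_1$, $r=g_2-g_1$, the source terms in the difference equation include $(h(g_2)-h(g_1))\nabla g_1$ and $(\bm w\cdot\nabla)(\bm u_1+\bm\zeta_1)$. Estimating these in $H^k$ via Moser produces factors $\|\nabla g_1\|_k\sim\|g_1\|_{k+1}$ and $\|\nabla\bm u_1\|_k\sim\|\bm u_1\|_{k+1}$, which are not available since $(\bm u_1,g_1)\in\bm Y^k$ only; equivalently, the forcing you claim is bounded in $L^2(J_T,\bm H^k)$ in fact lies only in $L^2(J_T,\bm H^{k-1})$. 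This is the standard one-derivative loss in difference estimates for quasi-linear hyperbolic systems, and it is exactly why the paper carries out the difference estimate at order $k-1$ rather than $k$. The Gronwall inequality you display with $\|\cdot\|_k$ on both sides is therefore not justified, and the bootstrap built on it does not start.

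The paper's route avoids this: $H^k$ existence on a short interval is obtained from the standard local well-posedness theorem for quasi-linear symmetrizable systems (Theorem~\ref{T:symeths}) applied to the system for $(\bm w,\varphi)$ — no difference estimate at top order is needed there — and the interval is then extended to all of $J_T$ via the $H^{k-1}$ difference estimate (which does close and yields the bound used in (ii)) together with the blow-up criterion for quasi-linear hyperbolic systems, noting that $H^{k-1}(\T^3)\hookrightarrow W^{1,\infty}(\T^3)$ for $k\ge4$. If you replace your $H^k$ bootstrap with the $H^{k-1}$ estimate plus blow-up criterion, your proof of (i) aligns with the paper's; the rest of your proposal is then consistent with the paper's argument.
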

We emphasize the fact that the constants $\de$ and $C$ depend only
on the norm of $\bm U_1$. This observation will be important in
Section \ref{1:S.4},  where we construct a solution of
(\ref{1:E0:eul11})-(\ref{1:E0:eulic1}) with the help of a
perturbative argument.
\begin{proof}
We seek  a solution of (\ref{1:E0:eul11})-(\ref{1:E0:eulic1}) in
the form $(\bm u_2,g_2):=(\bm u _1,g_1)+(\bm {w},\ph) $.
Substituting this into  (\ref{1:E0:eul11})-(\ref{1:E0:eulic1}) and
performing some transformations, we obtain the following problem:
\begin{align}
\p_t{\bm{w}}&+((\bm{u_1}+\bm{\zeta_1})\cdot\nabla)(\bm{w}+\bm{\eta})+
((\bm{w}+\bm{\eta})\cdot\nabla)(\bm{u_1}+\bm{\zeta_1})\nonumber\\&+((\bm{w}+\bm{\eta})\cdot\nabla)(\bm{w}+\bm{\eta})
 +h(g_1+\ph)\nabla (g_1+\ph)-h(g_1)\nabla g_1=\bm q,
 \label{1:EEE2c:eul13}\\
\p_t\ph&+((\bm{u_1}+\bm{\xi_1})\cdot\nabla)\ph+((\bm{w}+\bm{\sigma})\cdot\nabla)g_1+((\bm{w}+\bm{\sigma})\cdot\nabla)\ph
\nonumber\\&+\nabla\cdot (\bm{w}+\bm{\sigma})=0,\label{1:EEE2c:eul23}\\
&(\bm w,\ph)(0)=(\bm w_0,\ph_0),\label{1:EEE2:eulc3}
\end{align}
where $\bm{\eta}=\bm{\zeta_2}-\bm{\zeta_1}$,
$\bm{\sigma}=\bm{\xi_2}-\bm{\xi_1}$, $\bm q=\bm f_2-\bm f_1$, $\bm
w_0=\bm u_{20}-\bm u_{10}$ and $ \ph_0=g_{20}-g_{10}$.
       Problem (\ref{1:EEE2c:eul13})-(\ref{1:EEE2:eulc3}) is a quasi-linear symmetrizable hyperbolic system.
   Indeed, setting $V=\left(\!\!
           \begin{array}{cc}
             \bm w \\ \ph
           \end{array}\!\!
         \right)$ and $a_i^k=h(g_1+\ph)\delta_{i,k}$,
system    (\ref{1:EEE2c:eul13})-(\ref{1:EEE2:eulc3}) can be
rewritten in the form
  \begin{align}\label{1:E:symm}
\p_t{\bm{V}}&+\sum_{i=1}^3\bm A_i(t,\bm x, \bm V)\p_i \bm V+\bm
G(t,\bm x, \bm V)=0, &\bm V(0)=(\bm w_0,\ph_0),
\end{align}
where
\begin{align}
\bm A_i =\left(\!
                            \begin{array}{ccccc}\\ \!\!
                             (\bm{u_1}+\bm{\zeta_1}+\bm{w}+\bm{\eta})_i\!\!\! \!\!\!\!\!\!& 0 & 0 & a_1^i  \\
                              0 &\!\!\! \!\!\!\!\!\!(\bm{u_1}+\bm{\zeta_1}+\bm{w}+\bm{\eta})_i \!\!\! \!\!\!\!\!\!& 0 & a_2^i\\
                              0 & 0 &\!\!\! \!\!\!\!\!\!(\bm{u_1}+\bm{\zeta_1}+\bm{w}+\bm{\eta})_i \!\!\! \!\!\!\!\!\!& a_3^i \\
                              \delta_1^i & \delta_2^i & \delta_3^i & \!\!\! \!\!\!\!\!\!(\bm{u_1}+\bm{\zeta_1}+\bm{w}+\bm{\sigma})_i\!\! \end{array}
                          \right),\qquad\qquad\nonumber\\
\qquad\qquad\nonumber\\
                           \bm G(t,\bm x, \bm V)\qquad\qquad\qquad\qquad\qquad\qquad
                           \qquad\qquad\qquad\qquad\qquad\qquad\qquad\qquad\qquad
                           \qquad\qquad\quad
                           \nonumber\\ =\left(\!
                            \begin{array}{cc}
                                                   ((\bm{u_1}+\bm{\zeta_1})\cdot\nabla) \bm{\eta}+
((\bm{w}+\bm{\eta})\cdot\nabla)(\bm{u_1}+\bm{\zeta_1})+ (h(g_1+\ph)-h(g_1))\nabla g_1-\bm q\\
                             ((\bm{w}+\bm{\sigma})\cdot\nabla)g_1+\nabla\bm{\sigma}  \nonumber
                            \end{array}\!
                          \right).\qquad\qquad\nonumber
\end{align}
 Now note that (\ref{1:E:symm}) is symmetrizable hyperbolic system, since  \begin{align}
\bm A_0(t,\bm x, \bm V)  = \left(
                             \begin{array}{cccc}
                               1 & 0 & 0 & 0 \\
                               0 & 1 & 0 & 0 \\
                               0 & 0 & 1 & 0 \\
                               0 & 0 & 0 & h(g_1+\ph) \\
                             \end{array}
                           \right)
\end{align}
 is positive definite and $\bm A_0\cdot \bm A_i,$ $i=1,2,3$ are symmetric. By Theorem \ref{T:symeths}, there is a solution $ \bm V\in C(J_{T_0},\bm{H}^k)\times
 C(J_{T_0},H^k)$ of
 (\ref{1:E:symm}) for some   $T_0\le T$.
Now we prove that $T_0=T$.   First, let us rewrite system
(\ref{1:EEE2c:eul13}), (\ref{1:EEE2c:eul23}) in the form
\begin{align}
\p_t{\bm{w}}&+((\bm{u_1}+\bm{\zeta_1})\cdot\nabla)(\bm{w}+\bm{\eta})+
((\bm{w}+\bm{\eta})\cdot\nabla)(\bm{u_2}+\bm{\zeta_2})\nonumber\\&
 +h(g_1)\nabla \ph+(h(g_2)-h(g_1))\nabla g_2=\bm q,
 \label{1:EEE2c:eul143ew}\\
\p_t\ph&+((\bm{u_1}+\bm{\xi_1})\cdot\nabla)\ph+
((\bm{w}+\bm{\sigma})\cdot\nabla)g_2+\nabla\cdot
(\bm{w}+\bm{\sigma})=0.\label{1:EEE2c:eul243ew}
\end{align}
Taking the $\p^\al:=\frac{\p^\al}{\p x^\al},$ $|\al|\le k-1$
derivative of (\ref{1:EEE2c:eul143ew}) and multiplying the
resulting equation by $\p^\al \bm {w}$, we get
\begin{align}\label{1:EEE.rott1111}
\frac{1}{2}&\frac{\dd}{\dd t}\|\p^\al\bm{w}\|_0^2\nonumber\\&+
\int\p^\al\big(( (\bm{u_1}+\bm{\zeta_1})\cdot\nabla)
\bm{w}\big)\cdot\p^\al\bm {w}\dd \bm x+
\int\p^\al\big((\bm{w}\cdot\nabla)
(\bm{u_2}+\bm{\zeta_2})\big)\cdot\p^\al\bm {w}\dd \bm
x\nonumber\\&+ \int\p^\al\big( h(g_1)\nabla \ph\big)\cdot\p^\al\bm
{w}\dd \bm x +\int\p^\al\big((h(g_2)-h(g_1))\nabla
g_2\big)\cdot\p^\al\bm {w}\dd \bm x\nonumber\\&\le
C\|\bm{w}\|_{k-1}( \|\bm{\eta}\|_{k}+\|\bm q\|_{k-1}).
\end{align}
Integrating by parts, we see that
\begin{align}\label{1:EEE.intbyparts}
\int \p^\al\big(( (\bm{u_1}+\bm{\zeta_1})\cdot\nabla)
\bm{w}\big)\cdot\p^\al\bm {w}\dd \bm x\le&\int(
(\bm{u_1}+\bm{\zeta_1})\cdot\nabla) \p^\al\bm{w}\cdot\p^\al\bm
{w}\dd \bm x+C\|\bm{w}\|_{k-1}^2\nonumber\\=&-\frac{1}{2}\int(
\nabla\cdot(\bm{u_1}+\bm{\zeta_1}) ) |\p^\al\bm{w}|^2\dd \bm
x+C\|\bm{w}\|_{k-1}^2.
\end{align}
Inequalities  (\ref{1:EEE.rott1111}), (\ref{1:EEE.intbyparts}) and
the fact that  $ \bm H^k \hookrightarrow  \bm L^\infty$ for $k>
\frac{3}{2}$ imply that
\begin{align}\label{1:EEE.rott14111}
\frac{1}{2}\frac{\dd}{\dd t}\|\p^\al\bm{w}\|_0^2+ \int
h(g_1)\nabla \p^\al\ph\cdot\p^\al\bm {w}\dd \bm x\le&
C\|\bm{w}\|_{k-1}( \|\bm{w}\|_{k-1}+\|\ph\|_{k-1}\nonumber\\&+
\|\bm{\eta}\|_{k}+\|\bm q\|_{k-1}).
\end{align}
On the other hand, applying $\p^\al$ to (\ref{1:EEE2c:eul243ew}),
multiplying  the resulting equation by $h(g_1)\p^\al\ph$ and
integrating over $\T^3$, we obtain
\begin{align}
\frac{1}{2}&\frac{\dd}{\dd t}\int h(g_1) (\p^\al\ph)^2\dd \bm x-
\frac{1}{2}\int \p_t h(g_1) (\p^\al\ph)^2\dd \bm x\nonumber\\&+
\int \p^\al((\bm{u_1}+\bm{\xi_1})\cdot\nabla \ph)h(g_1)\p^\al\ph
\dd \bm x + \int \p^\al(\bm{w}\cdot\nabla g_2)h(g_1)\p^\al\ph \dd
\bm x\nonumber\\&+\int \p^\al(\nabla\cdot\bm{w}) h(g_1)\p^\al\ph
\dd \bm x\le C\|\ph\|_{k-1} \|\bm{\sigma}\|_{k}.\nonumber
\end{align}
As  $g_1\in C(J_T,H^k)$ and  $h\in C^k(\R)$,  integration by parts
in the  third  term on the left-hand side implies (cf.
(\ref{1:EEE.intbyparts}))
\begin{align}\label{1:EEE.rotft11111}
\frac{1}{2}\frac{\dd}{\dd t}\int h(g_1) (\p^\al\ph)^2\dd \bm
x+\int \p^\al(\nabla\cdot\bm{w}) h(g_1)\p^\al\ph \dd \bm x \le&
C\|\ph\|_{k-1}(\|\ph\|_{k-1}\nonumber\\&+\|\bm w\|_{k-1}+
\|\bm{\sigma}\|_{k}).
\end{align}
Adding (\ref{1:EEE.rott14111}) and (\ref{1:EEE.rotft11111}) and
using the facts that $h(s)>0$ for any $s\in \R$
\begin{align}\int \p^\al(\nabla\cdot\bm{w})
h(g_1)\p^\al\ph \dd \bm x +\int h(g_1)\nabla
\p^\al\ph\cdot\p^\al\bm {w} \dd \bm x=&-\int \p^\al \ph(\nabla
h(g_1)) \cdot\p^\al\bm {w} \dd \bm x,\nonumber
\end{align}
 we get
\begin{align}
\frac{\dd}{\dd t}\|\p^\al\bm{w}\|_0^2+  \frac{\dd}{\dd t}\int
(\p^\al\ph)^2\dd \bm x\le&
C(\|\bm{w}\|^2_{k-1}+\|\ph\|^2_{k-1}+\|\bm{\sigma}\|^2_{k}+
\|\bm{\eta}\|^2_{k}\nonumber\\&+\|\bm{q}\|_{k-1}^2).\nonumber
\end{align} Taking the sum over all $\al$, $|\al|\le k-1$ and applying the Gronwall inequality,    we obtain
\begin{align} \label{1:E:nortpah}
\|\bm{w}\|_{k-1}^2+  \|\ph\|_{k-1}^2 \le&
C(\|\bm{\sigma}\|^2_{L^2(J_T,H^k)}+ \|\bm{\eta}\|^2_{L^2(J_T,H^k)}
+\|\bm{q}\|_{L^2(J_T,H^{k-1})}^2).
\end{align}
   Thus we have that $T_0=T.$ Moreover, (\ref{1:E:nortpah})
completes also the proof of $(ii)$.

Assertion  $(iii)$ can be proved by repeating the arguments of the
proof of Theorem 1.4 in \cite{BDV}  for Sobolev spaces $\bm H^k$.
\end{proof}

  \subsection{ Continuity property of the resolving operator}

In this subsection, we establish another property of resolving
operator, which will play an essential role in Section
\ref{1:S.4.2}.

\begin{theorem}\label{1:T.te2} Let  $\bm \zeta _n$ and $ \bm \xi _n$ be   bounded sequences in $C(J_T,\bm H^{k+2})$ and  ${\bm\xi}_n$ be   such
that
\begin{align}
 \int_0^{t_0}\bm\xi_n(t)\cdot\bm \chi_n(t)\dd t\rightarrow 0 \text{ in }  H^{k}  \label{1:EE.xihatk}
\end{align}
for any $t_0\in J_T$ and for any uniformly equicontinuous sequence
$\bm \chi_n:J_T \rightarrow~\bm H^{k}$. Suppose that for $\bm
U_n=(\bm{u}_{0}, g_{0}, \bm{\zeta}_n, {\bm\xi}_n,\bm f) \in\bm
X^{k+1} $ problem (\ref{1:E0:eul11})-(\ref{1:E0:eulic1})  has a
solution $(\bm{u}_n,g_n)\in  \bm Y^{k+1}$.   Then for sufficiently
large $n\ge 1$ there exists a solution $\RR(\bm V_n)\in \bm
Y^{k+1}$ with $\bm V_n=(\bm{u}_{0}, g_{0}, \bm{\zeta}_n, 0,\bm f)
$, which  verifies
$$\RR(\bm U_n)-\RR(\bm V_n)\rightarrow 0
\text{ in } \bm Y^{k}. $$
\end{theorem}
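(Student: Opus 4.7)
The plan is to split the proof into two independent tasks: constructing the solution $\RR(\bm V_n)$ for large $n$ by direct hyperbolic theory (since $\bm V_n$ has vanishing $\bm\xi$-component), and then establishing the convergence $\RR(\bm U_n)-\RR(\bm V_n)\to 0$ in $\bm Y^k$ by exploiting the hypothesis (\ref{1:EE.xihatk}) in an energy estimate for the linearized difference system, followed by a standard interpolation.

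\textbf{Step 1 (Existence of $\RR(\bm V_n)$).} For $\bm V_n=(\bm u_0,g_0,\bm\zeta_n,0,\bm f)$ the system (\ref{1:E0:eul11})-(\ref{1:E0:eulic1}) is a standard quasi-linear symmetrizable hyperbolic Euler system whose coefficients depend only on $\bm\zeta_n$, bounded in $C(J_T,\bm H^{k+2})$. Theorem \ref{T:symeths} gives local existence in $\bm Y^{k+1}$, and the classical hyperbolic energy estimate at level $H^{k+1}$ (depending only on the initial data, $\bm\zeta_n$ and $\bm f$, all uniformly bounded in $n$) extends the solution to the full interval $J_T$. One obtains $\RR(\bm V_n)=(\bm v_n,l_n)\in \bm Y^{k+1}$ with norm uniformly bounded in $n$.

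\textbf{Step 2 (Difference system and smallness of a time-integrated source).} Set $\bm w_n:=\bm u_n-\bm v_n$ and $\ph_n:=g_n-l_n$. Subtracting the two systems as in the proof of Theorem \ref{1:T:pert}, one obtains a linear symmetrizable hyperbolic system for $(\bm w_n,\ph_n)$ with vanishing initial data and only a single forcing $-E_n$ on the right-hand side of the continuity equation, where
$$E_n := \bm\xi_n\cdot\nabla g_n + \nabla\cdot\bm\xi_n = \rho_n^{-1}\nabla\cdot(\rho_n\bm\xi_n),\qquad \rho_n:=e^{g_n}.$$
The momentum equation carries no direct $\bm\xi_n$-contribution, since the control parameter acts there only through $\bm\zeta$. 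From the equation for $g_n$ and the $C(J_T,\bm H^{k+2})$-bound on $\bm\xi_n$, the derivative $\p_t g_n$ is bounded in $H^k$, so both $g_n$ and $\rho_n$ are uniformly equicontinuous from $J_T$ to $H^k$. Applying (\ref{1:EE.xihatk}) componentwise with $\bm\chi_n=\rho_n\bm e_i$ yields the pointwise convergence $\int_0^{t_0}\rho_n\bm\xi_n\,ds\to 0$ in $\bm H^{k}$, which is automatically uniform in $t_0$ thanks to the uniform Lipschitz dependence of $t_0\mapsto\int_0^{t_0}\rho_n\bm\xi_n\,ds$ on $t_0$. Setting $B_n(t):=\nabla\cdot\int_0^t\rho_n\bm\xi_n\,ds$ and using the identity
$$\Psi_n(t):=\int_0^t E_n\,ds = \rho_n^{-1}(t)B_n(t)-\int_0^t\p_s(\rho_n^{-1})B_n(s)\,ds,$$
obtained by integration by parts in $s$, one concludes $\Psi_n\to 0$ in $C(J_T,H^{k-1})$.

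\textbf{Step 3 (Energy estimate at regularity $k-1$ and interpolation).} For $|\al|\le k-1$, apply $\p^\al$ to the difference system, pair the momentum equation with $\p^\al\bm w_n$ and the continuity equation with $h(g_n)\p^\al\ph_n$, and integrate over $[0,t]\times\T^3$. The calculation (\ref{1:EEE.rott1111})-(\ref{1:E:nortpah}) from the proof of Theorem \ref{1:T:pert} goes through verbatim except for the contribution
$$I_n := -\int_0^t\!\!\int h(g_n)\,\p^\al E_n\,\p^\al\ph_n\,dx\,ds = -\int_0^t\!\!\int h(g_n)\,\p^\al\p_s\Psi_n\,\p^\al\ph_n\,dx\,ds.$$
Integrating by parts in $s$ (using $\Psi_n(0)=0$) replaces $\p_s\Psi_n$ by $\Psi_n$ plus a boundary term at $s=t$. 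Using the continuity equation to expand $\p_s\ph_n$ produces in turn a quadratic-in-$\Psi_n$ term which a second integration by parts in time converts into boundary values of $(\p^\al\Psi_n)^2$; all other pieces are linear in $\Psi_n$ multiplied by a priori bounded quantities. Gronwall's lemma then yields $\|(\bm w_n,\ph_n)\|_{\bm Y^{k-1}}\to 0$. Since $(\bm w_n,\ph_n)$ is uniformly bounded in $\bm Y^{k+1}$ by Step 1, the interpolation inequality $\|f\|_{H^k}\le C\|f\|_{H^{k-1}}^{1/2}\|f\|_{H^{k+1}}^{1/2}$ lifts the convergence to $\bm Y^k$, completing the proof.

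The main obstacle I expect is Step 3: hypothesis (\ref{1:EE.xihatk}) only yields smallness of $\Psi_n$ in $H^{k-1}$, so the successive integrations by parts in time used to estimate $I_n$ must avoid any further loss of spatial regularity. The final interpolation step precisely recovers the single derivative lost in Step 2 when passing from the $\bm H^k$-smallness of $\int_0^t\rho_n\bm\xi_n\,ds$ to the $H^{k-1}$-smallness of its divergence.
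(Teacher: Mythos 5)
The decisive gap is your Step 1. For the quasi-linear compressible Euler system the $H^{k+1}$ energy inequality is nonlinear in the unknown (it has the Riccati-type form $\frac{\dd}{\dd t}\|(\bm v_n,l_n)\|_{k+1}\le C\big(\|(\bm v_n,l_n)\|_{k+1}\big)\,\|(\bm v_n,l_n)\|_{k+1}+\dots$), so it yields only a lifespan depending on the size of the data, not existence on the whole prescribed interval $J_T$ — compressible Euler solutions can blow up (form shocks) in finite time. Existence of $\RR(\bm V_n)$ on all of $J_T$ is precisely the nontrivial part of the statement, which is why the theorem asserts it only for sufficiently large $n$: if it followed from standard hyperbolic theory plus uniform bounds on $\bm u_0,g_0,\bm\zeta_n,\bm f$, it would hold for every $n$ and would not need hypothesis (\ref{1:EE.xihatk}) at all. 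In the paper existence and smallness are obtained simultaneously and perturbatively: one writes $\RR(\bm V_n)=\RR(\bm U_n)+(\bm w_n,\ph_n)$, shows $\|\RR(\bm U_n)\|_{\bm Y^{k}}$ is uniformly bounded so that the difference system is locally solvable on intervals of a uniform length $T_0$ whenever its data are smaller than some $\nu$, uses (\ref{1:EE.xihatk}) to make the time-integrated source $\int_0^{T_0}h(g_n)\p^\al\ph_n\,\p^\al(\bm\xi_n\cdot\nabla g_n+\nabla\cdot\bm\xi_n)\,\dd s$ negligible in the $H^k$ energy estimate so that the difference stays below $\nu$, iterates this over $[0,T]$, and finally invokes the blow-up criterion to upgrade $\RR(\bm V_n)$ to $\bm Y^{k+1}$. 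Your Steps 2--3 capture part of this mechanism (energy estimate on the difference, with the $\bm\xi_n$ source killed only after integration in time), but as written they presuppose the global existence that has to be proved.

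A second, dependent problem: the interpolation in your Step 3 needs a uniform $\bm Y^{k+1}$ bound on $\RR(\bm V_n)$, which again is only available after the perturbative existence argument, not before it; note also that the paper avoids the detour through $H^{k-1}$ and interpolation altogether by estimating directly at order $k$, which is possible because $\bm\xi_n$ is bounded in $C(J_T,\bm H^{k+2})$, two derivatives above the target space. Your device of passing to $\Psi_n(t)=\int_0^t E_n\,\dd s\to0$ in $C(J_T,H^{k-1})$ with repeated integration by parts in time is a reasonable variant for the convergence part (modulo justifying the uniform-in-$n$ bound of $g_n$ in $H^{k+1}$ used for the equicontinuity of $\rho_n$), but it cannot substitute for the missing existence step, which is where hypothesis (\ref{1:EE.xihatk}) and the restriction to large $n$ actually enter.
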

\begin{proof}

   As $\bm V_n\in \bm X^{k+1}$, a blow-up criterion  for
quasi-linear symmetrizable hyperbolic systems \cite[Section 16,
Proposition 2.4]{tey1} implies that, if we have
 the
existence of $ \RR(\bm V_n)$ in $\bm Y^{k}$, then  $\RR(\bm
V_n)\in Y^{k+1}$.
 We seek the solution $ \RR(\bm V_n)$ in the form $(\bm w_n+\bm
u_n,\ph_n+g_n)$.  For $(\bm w_n,\ph_n)$ we have the following
problem (cf. (\ref{1:EEE2c:eul13})-(\ref{1:EEE2:eulc3}))
\begin{align}
\p_t{\bm{w}_n}&+((\bm{u}_n+\bm{\zeta_n})\cdot\nabla)\bm{w}_n+
(\bm{w}_n\cdot\nabla)(\bm{u}_n+\bm
{\zeta_n})\nonumber\\&+(\bm{w}_n\cdot\nabla)\bm{w}_n
 +h(g_n+\ph_n)\nabla (g_n+\ph_n)-h(g_n)\nabla g_n=0,
 \label{1:ETe2:eul13}\\
\p_t\ph_n&+(\bm{u}_n\cdot\nabla)\ph_n+(\bm{w}_n\cdot\nabla)g_n+(\bm{w}_n\cdot\nabla)\ph_n-\bm{\xi}_n\cdot\nabla
g_n
\nonumber\\&+\nabla\cdot (\bm{w}_n-\bm{\xi}_n)=0,\label{1:EEE2c:eul2843}\\
&(\bm w_n,\ph_n)(0)=(0,0).\label{1:ETE2:eulc3}
\end{align}
As $\|\bm{\xi}_n\cdot\nabla g_n\|_{k}+\|\nabla\cdot\bm{\xi}_n\|_k$
is not necessarily small, we cannot immediately conclude the
existence of  a solution $(\bm w_n,\ph_n)\in Y^{k}$.
 However, from the theory of the local existence of   solutions for quasi-linear symmetrizable
 hyperbolic systems  we have that for any   constant $\nu>0$ there is  a time   $T_{0,n}>0$   such that if
 $\|\tilde {\bm{w}}_n(0)\|_{k}+\|\tilde {\ph}_n(0)\|_{k}< \nu$, then problem (\ref{1:ETe2:eul13})-(\ref{1:EEE2c:eul2843})
  with initial data $(\tilde {\bm{w}}_n(0), \tilde \ph_n(0))$ has a  solution $(\bm w_n,\ph_n)\in Y^{k}$ on the interval    $[0,
  T_{0,n}]$.    Here time $T_{0,n}>0$ depends only on $\| \RR(\bm U_n)\|_{\bm Y^{k} }$ and
  $\nu$.
     Using estimation (\ref{1:E:nortpah}) and the fact that $\bm \zeta _n$ and $ \bm \xi _n$ are
   bounded sequences in $C(J_T,\bm H^{k+1})$, we get
   \begin{align*}
\|\bm{u_n}\|_{k}^2+  \|g_n\|_{k}^2 \le& C(\|\bm \zeta
_n\|^2_{L^2(J_T,H^{k+1})}+ \|\bm \xi _n \|^2_{L^2(J_T,H^{k+1})}
+\|\bm f\|_{L^2(J_T,H^k)}^2)\le C_1.
\end{align*}
Thus $\| \RR(\bm U_n)\|_{Y^k} $ is bounded and solutions $(\bm
w_n,\ph_n)$ are defined on the same interval $J_{T_0}$.   A simple
iterative argument shows that, to complete the proof, it
   suffices to prove that $\|\bm w_n\|_{C(T_0,H^k)}+\|\ph_n\|_{C(T_0,H^k)}<\nu$ for sufficiently large $n$. To this end,
   let us argue as in the proof of Theorem~ \ref{1:T:pert}. Taking the  $\p^\al$, $|\al|\le k$
   derivative of (\ref{1:ETe2:eul13}) and multiplying the resulting equation by  $\p^\al\bm w_n$ in $L^2$, we get
      (cf. (\ref{1:EEE.rott14111}))
\begin{align}\label{1:EEE.rott1411122}
\frac{1}{2}\frac{\dd}{\dd t}\|\p^\al\bm{w_n}\|_0^2+ \int
h(g_n)\nabla \p^\al\ph_n\cdot\p^\al\bm {w}_n\dd \bm x\le&
C\|\bm{w}_n\|_{k}( \|\bm{w_n}\|_{k}+\|\ph_n\|_{k}).
\end{align}
Then, applying $\p^\al$, $|\al|\le k$  to
(\ref{1:EEE2c:eul2843})   and multiplying the obtained equation by
$h(g_n)\p^\al\ph_n$, we derive
\begin{align}\label{1:EEE.rotft111112a}
\frac{1}{2}\frac{\dd}{\dd t}&\int h(g_n) (\p^\al\ph_n)^2\dd \bm
x+\int \p^\al(\nabla\cdot\bm{w}_n) h(g_n)\p^\al\ph_n \dd \bm x
\nonumber
\\ \le&\int h(g_n)\p^\al\ph_n\p^\al(\bm{\xi}_n\cdot\nabla g_n
+\nabla\cdot \bm{\xi}_n) \dd \bm x+
C\|\ph_n\|_{k}(\|\ph_n\|_{k}+\|\bm w_n\|_{k}).
\end{align}
Combining  (\ref{1:EEE.rott1411122}), (\ref{1:EEE.rotft111112a})
and the fact that
$$\int_0^{T_0} h(g_n)\p^\al\ph_n\p^\al(\bm{\xi}_n\cdot\nabla g_n +\nabla\cdot \bm{\xi}_n) \dd s\rightarrow 0   \text{ in }L^2(\T^3),$$
we get that   $\|\bm
w_n\|_{C(T_0,H^k)}+\|\ph_n\|_{C(T_0,H^k)}<\nu$ for sufficiently
large $n$.   Thus $\RR(\bm V_n)\in Y^{k}$ and
$$\|\RR(\bm U_n)-\RR(\bm V_n) \|_{\bm Y^{k}} \rightarrow 0. $$
\end{proof}

\section{Main results}
\subsection{Controllability of  Euler system}\label{1:subsect3.1}
Let us consider the controlled system associated with the
compressible Euler problem:
\begin{align}
\p_t{\bm{u}}+(\bm{u}\cdot\nabla)\bm{u}+ h(g)\nabla g&=\bm{f}+\bm{\eta}, \label{1:E0c:eul1}\\
(\p_t+\bm{u}\cdot\nabla)g+\nabla\cdot \bm{u}&=0,\label{1:E0c:eul2}\\
\bm{u}(0)=\bm{u}_0, \quad g(0)&=g_0,\label{1:E0:eulc}
\end{align}
where $\bm f\in C^\infty([0,\infty),\bm H^{k+2})$, $\bm u_0\in \bm
H^k$ and $g_0\in H^k$ are given functions, and~ $\bm \eta$ is the
control taking values in a finite-dimensional subspace $\bm
E\subset \bm H^{k+2}$. We denote by $\Theta(\bm u_0,g_0,\bm f)$
the set of functions  $\bm \eta \in L^2(J_T, \bm H^k)$ for which
problem (\ref{1:E0c:eul1})-(\ref{1:E0:eulc}) has a solution in $
\bm Y^k$.  For any $\alpha>0$ and $k\in \N$ let us define the set
$$G^k_\alpha=\{g\in H^k: \int e^{g(\bm x)}\dd \bm x=\alpha\}.$$
Recall that $\RR$ is  the resolving operator of
(\ref{1:E0:eul11})-(\ref{1:E0:eulic1}).    We denote by
$R_t(\cdot)$ the restriction of $R(\cdot)$ to the time $t$.  Let
$\bm X\subset L^2(J_T,\bm H^k)$ be an arbitrary vector
 subspace. We endow $G^k_\al$   with the
metric defined  by the norm of $H^k$ and $\bm X$ by the norm of
$L^2(J_T,\bm H^k)$. Recall that  for a function $f :\R^m
\rightarrow \R^n$ a point $x\in \R^m$ is said to be  regular point
if the  differential $D f(x) $ is surjective. Then, by the inverse
function theorem, there exists a neighborhood of $f(x)$ such that
a right inverse of $f $ is well defined. Now we give a
generalization of the notion of a regular point for a continuous
function $\bm F:\bm H^k\times G^k_\alpha\rightarrow \R^N$.

\begin{definition}\label{1:D.2.1} A point $( \bm {u_1},  g_1)$ is
 said to be regular for $\bm F$ if there is a non-degenerate closed ball
$\bm B\subset \R^N$ centred at $\bm {y_1} = \bm F( \bm {u_1},
g_1)$ and a continuous function $\bm G:\bm B\rightarrow \bm
H^k\times G^k_\alpha$ such that $\bm G(\bm{ y_1}) =( \bm {u_1},
g_1)$ and $\bm F(\bm G(\bm y)) = \bm y$ for any $\bm y\in \bm B$.
\end{definition}
\begin{definition}\label{1:D.3.1} System (\ref{1:E0c:eul1}), (\ref{1:E0c:eul2}) with $ \bm {\eta} \in \bm X$
 is said to be  controllable
at~time $T>0$ if for any constants $\e,~ \alpha>0$, for any
continuous function $\bm F:\bm H^k\times G^k_\alpha\rightarrow
\R^N$, for any initial data $(\bm u_0, g_0)\in \bm H^k\times
G^k_\alpha$ and for any regular point $( \bm {u_1},  g_1)$ for
$\bm F$ there is a control $\bm \eta \in \Theta(\bm u_0,g_0,\bm
f)\cap \bm X $ such that
\begin{align}
&\|\RR_T(\bm u_0,g_0,0,0, \bm {\eta})-( \bm {u_1},  g_1)\|_{\bm
H^k\times H^k}<\e,\nonumber\\& \bm F(\RR_T(\bm u_0,g_0,0,0, \bm
{\eta}))=\bm F( \bm {u_1},  g_1).\nonumber
\end{align}
\end{definition}
 Let us note that this concept of controllability is stronger than
the approximate controllability   and is   weaker than the  exact
controllability.   In the following example   the constructed
function admits a right inverse.
\begin{example} For any function $\bm z \in \bm H^k\times
G^k_\al$ we set $$\bm F(\bm z):=\int |\bm z(x)|^2\dd x.$$ Then for
any nonzero elements $\bm z_1 \in \bm H^k$ and $z_2 \in G^k_\al$
the point $\bm z=(\bm z_1,z_2)$ is regular for $\bm F$.
\end{example}

For any finite-dimensional subspace $\bm E\subset \bm H^{k+2}$, we
denote by $\FF(\bm E)$ the largest vector space $\bm F\subset \bm
H^{k+2}$ such that for any $\bm \eta_1\in \bm F$ there are vectors
$\bm \eta, \bm \zeta^1,\ldots ,\bm \zeta^n\in \bm E$  satisfying
the relation
\begin{eqnarray}
\bm \eta_1=\bm\eta-\sum_{i=1}^n (\bm \zeta^i \cdot\nabla)\bm
\zeta^i.\nonumber
\end{eqnarray}
We define $\bm {E_k}$ by the rule
\begin{eqnarray}\
\bm E_0=\bm E,\quad \bm E_n=\FF(\bm E_{n-1})\quad \textrm{for}
\quad n \geq 1,\quad \bm E_\infty=\bigcup_{n=1}^\infty \bm
E_n.\nonumber
\end{eqnarray}
The following theorem is the main result of this section.
\begin{theorem}\label{1:T.2.1}
Suppose $\bm f\in C^\infty([0,\infty),\bm H^{k+2})$. If $\bm
E\subset \bm H^{k+2}$ is a finite-dimensional subspace such that
$\bm E_\infty$ is dense in $\bm H^{k+1}$, then system
(\ref{1:E0c:eul1}), (\ref{1:E0c:eul2})  with $\bm\eta\in
C^\infty(J_T,\bm E)$ is
   controllable at time  $T>0$.
\end{theorem}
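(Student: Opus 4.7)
The plan is to apply the Agrachev--Sarychev saturation scheme. I would reduce the theorem to two claims: \textit{(a)} for some $N$ large enough, the system is controllable at time $T$ with controls in $C^\infty(J_T,\bm E_N)$; and \textit{(b)} for every $n\ge 0$, controllability with $\bm E_{n+1}$-valued smooth controls implies controllability with $\bm E_n$-valued ones. A finite downward induction from $N$ to $0$, combined with (a) and (b), then yields the conclusion with $\bm E=\bm E_0$-valued controls.

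For (a), I would first produce, for any initial data $(\bm u_0,g_0)\in\bm H^k\times G^k_\alpha$ and any regular point $(\bm u_1,g_1)$ of $\bm F$, a single smooth control $\bm\eta^\star\in C^\infty(J_T,\bm H^{k+1})$ that steers the system $\e$-close to $(\bm u_1,g_1)$ while preserving the $\bm F$-constraint. By Definition~\ref{1:D.2.1} one may replace the target by a smoother $(\tilde{\bm u}_1,\tilde g_1)\in\bm H^{k+2}\times G^{k+2}_\alpha$ with $\bm F(\tilde{\bm u}_1,\tilde g_1)=\bm F(\bm u_1,g_1)$, interpolate by a smooth path $(\bm u(t),g(t))$ that solves~(\ref{1:E0c:eul2}) exactly with the prescribed endpoints (the mass constraint being automatically preserved), and read off the required force $\bm f+\bm\eta^\star$ from~(\ref{1:E0c:eul1}). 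Density of $\bm E_\infty$ in $\bm H^{k+1}$ then supplies a $C^\infty(J_T,\bm E_N)$-valued approximation of $\bm\eta^\star$, and the continuity statement of Theorem~\ref{1:T:pert}(iii) transfers this approximation to closeness of the endpoint of the trajectory.

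For (b), I would follow the chain of equivalences outlined in the introduction. Given $\bm\eta_0(\cdot)\in\bm E_{n+1}=\FF(\bm E_n)$, decompose pointwise as $\bm\eta_0(t)=\bm\eta(t)-\sum_i(\bm\zeta^i(t)\cdot\nabla)\bm\zeta^i(t)$ with $\bm\eta(t),\bm\zeta^i(t)\in\bm E_n$, and construct rapidly oscillating controls $\bm\eta^\delta,\bm\xi^\delta\in C^\infty(J_T,\bm E_n)$ such that in the limit $\delta\to 0$ the convection $((\bm u+\bm\xi^\delta)\cdot\nabla)(\bm u+\bm\xi^\delta)$ generates precisely $\sum_i(\bm\zeta^i\cdot\nabla)\bm\zeta^i$ on the velocity equation. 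This yields the equivalence with controllability of the auxiliary system~(\ref{1:E.I12})--(\ref{1:E.I22}) under $\bm\eta,\bm\xi\in\bm E_n$. I would then invoke Theorem~\ref{1:T.te2} to pass from~(\ref{1:E.I12})--(\ref{1:E.I22}) to~(\ref{1:E.I11})--(\ref{1:E.I21}): by construction, $\bm\xi^\delta$ satisfies $\int_0^{t_0}\bm\xi^\delta\cdot\bm\chi_\delta\,\dd t\to 0$ against any uniformly equicontinuous test sequence $\bm\chi_\delta$, which is exactly the hypothesis of that theorem and allows us to drop $\bm\xi$ from the density equation in the limit. Finally, the change of variable $\bm v=\bm u+\bm\xi$ converts~(\ref{1:E.I11})--(\ref{1:E.I21}) into the original system with effective control $\bm\eta+\partial_t\bm\xi\in\bm E_n$.

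The main obstacle is the analytical step embodied in Theorem~\ref{1:T.te2}: one must ensure that the concrete oscillating profiles chosen for $\bm\xi^\delta$ genuinely satisfy the weak-convergence hypothesis against any uniformly equicontinuous sequence $\bm\chi_\delta$, and not only against fixed test functions. The passage through~(\ref{1:E.I12})--(\ref{1:E.I22}) before reaching~(\ref{1:E.I11})--(\ref{1:E.I21}) is precisely what makes the saturation work in the compressible setting, since the control $\bm\eta$ does not act on the density equation and a direct oscillation argument on the velocity equation alone would fail to control the density at the endpoint. Combining the full equivalence chain in (b) with the base case (a) then delivers the theorem.
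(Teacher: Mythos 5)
Your saturation plan for part (b) — decomposing $\bm\eta_0\in\FF(\bm E_n)$ pointwise, introducing fast oscillations, passing through the auxiliary system~(\ref{1:E.I12})--(\ref{1:E.I22}) and then invoking Theorem~\ref{1:T.te2} to drop $\bm\xi$ from the density equation — matches the structure of the paper's Proposition~\ref{1:P.2} and Lemma~\ref{1:LL.reguly}. But your part (a) has two genuine problems.

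First, you propose to interpolate ``by a smooth path $(\bm u(t),g(t))$ that solves~(\ref{1:E0c:eul2}) exactly with the prescribed endpoints'' and then read off the force from~(\ref{1:E0c:eul1}). This step fails as stated: once $\bm u(\cdot)$ and $g(0)=g_0$ are chosen, the transport equation $(\p_t+\bm u\cdot\nabla)g+\nabla\cdot\bm u=0$ \emph{determines} $g(T)$, so you cannot prescribe both endpoints for $g$ independently of the velocity path. You would have to engineer $\bm u(\cdot)$ so that the transport flow carries $g_0$ to (a smoothing of) $\hat g$, while also landing $\bm u(T)$ near $\hat{\bm u}$, and there is no reason such a $\bm u(\cdot)$ should exist. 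The paper circumvents this by choosing $(\bm u_\mu,g_\mu)$ as independent interpolants and then inserting an auxiliary advection field $\bm\xi_\mu$ into the density equation, chosen as the solution of the elliptic problem $\nabla\cdot(e^{g_\mu}\bm\xi_\mu)=-\p_te^{g_\mu}-\nabla\cdot(e^{g_\mu}\bm u_\mu)$, whose solvability is exactly where condition~(\ref{1:E.Erho}) enters. That $\bm\xi_\mu$ is then removed via a time-cutoff approximation and the relabeling $\bm v=\bm u+\bm\xi$; none of this appears in your step (a), and without it the construction of $\bm\eta^\star$ does not go through.

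Second, you never explain how you achieve the \emph{exact} equality $\bm F(\RR_T(\cdot))=\bm F(\bm u_1,g_1)$. Replacing the target by a smoother $(\tilde{\bm u}_1,\tilde g_1)$ with the same $\bm F$-value and then steering $\e$-close to it yields a terminal state that is merely \emph{close} to that target, so its $\bm F$-value need not coincide with $\bm y_1$. The paper handles this with a degree-theoretic argument: one proves the stronger $(\e,\bm u_0,g_0,\bm K)$-controllability, which provides a \emph{continuous family} $\bm\Psi$ of controls over the compact set $\bm K=\bm G(\bm B)$ supplied by the regular-point structure in Definition~\ref{1:D.2.1}; one then shows that $\bm\Phi(\bm y)=\bm F\bigl(\RR_T(\bm u_0,g_0,0,0,\bm\Psi\circ\bm G(\bm y))\bigr)$ is within $r$ of the identity on $\bm B$ and applies Brouwer's theorem to the map $\bm y\mapsto\bm y_1+\bm y-\bm\Phi(\bm y)$ to produce a fixed point achieving the equality. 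Your sketch produces a single control, not a continuous family parametrized by the target, so the Brouwer step is unavailable; this is a substantive gap, not just a detail to be filled in.
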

This theorem will be established in Section \ref{1:S.3.2}. We now
construct an example of a subspace $E$ for which the hypothesis of
Theorem \ref{1:T.2.1} is satisfied.

 Let us introduce the functions
\begin{align}
c^i_{\bm{m}}(\bm {x}) = \bm e_i \cos\langle \bm m, \bm x \rangle,
\,s^i_{\bm m}(\bm x) = \bm e_i \sin\langle \bm m, \bm x
\rangle,\quad i=1,2,3,\nonumber
\end{align}
where  $\bm m \in \Z^3$ and $\{\bm e_i\}$ is the standard basis in
$\R^3$.
\begin{lemma}\label{1:L.Lexam}
 If $\bm E=\textup{span}\{c^i_{\bm{m}},s^i_{\bm{m}} ,0\le m_j\le1,  i,j=1,2,3\}$, then the vector space  $\bm E_\infty$
is dense in $\bm H^{k}$ for any $k\ge 0$.
\end{lemma}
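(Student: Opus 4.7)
The plan is to establish, by Fourier analysis on $\T^3$, that $\bm E_\infty$ contains every vector-valued trigonometric mode $\bm v\cos\langle\bm m,\bm x\rangle$ and $\bm v\sin\langle\bm m,\bm x\rangle$ with $\bm m\in\Z^3$ and $\bm v\in\R^3$; the linear span of these is dense in $\bm H^k(\T^3)$ for every $k\ge 0$ by the classical Fourier theory, which yields the lemma.

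The structural input about $\FF$ I would use is that, via the polarization identity
\[
(\bm a,\nabla)\bm b+(\bm b,\nabla)\bm a = (\bm a+\bm b,\nabla)(\bm a+\bm b)-(\bm a,\nabla)\bm a-(\bm b,\nabla)\bm b,
\]
the space $\bm E_{n+1}=\FF(\bm E_n)$ contains $\bm E_n$ together with the symmetric bilinear expressions $B(\bm a,\bm b):=(\bm a,\nabla)\bm b+(\bm b,\nabla)\bm a$ for all $\bm a,\bm b\in\bm E_n$. I would then pass to complex exponentials $\phi_{\bm m}(\bm x):=e^{i\langle\bm m,\bm x\rangle}$ and introduce $M_n\subset\Z^3$, the set of frequencies $\bm m$ such that $\bm v\,\phi_{\bm m}\in\bm E_n^{\C}$ for every $\bm v\in\C^3$. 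The hypothesis yields $M_0\supset\{0,1\}^3\cup\{-1,0\}^3$, which contains $\pm\bm e_1,\pm\bm e_2,\pm\bm e_3$, and a direct computation gives
\[
B(\bm a\,\phi_{\bm m},\,\bm b\,\phi_{\bm m'}) = i\bigl[(\bm a\cdot\bm m')\bm b+(\bm b\cdot\bm m)\bm a\bigr]\,\phi_{\bm m+\bm m'}.
\]

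The central inductive claim is $M_{n+1}\supset M_n+M_n$. To prove it, fix $\bm m,\bm m'\in M_n$ with $\bm n:=\bm m+\bm m'\ne\bm 0$ and view the coefficient above as the linear map $\bm b\mapsto\bigl((\bm a\cdot\bm m')I+\bm a\bm m^\top\bigr)\bm b$ on $\C^3$; by the rank-one perturbation formula its determinant equals $(\bm a\cdot\bm m')^2(\bm a\cdot\bm n)$, which is nonzero for any $\bm a$ outside the two hyperplanes $\bm a\cdot\bm m'=0$ and $\bm a\cdot\bm n=0$. Fixing such an $\bm a$ and varying $\bm b$ thus reaches every vector in $\C^3$ at frequency $\bm n$; the degenerate case $\bm m+\bm m'=\bm 0$ corresponds to the constant mode, already in $M_0$. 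Since $\pm\bm e_i\in M_0$, iterating $M_{n+1}\supset M_n+M_n$ gives $M_\infty=\Z^3$, completing the argument.

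The main technical obstacle I anticipate is the very first step: the polarization expresses $B(\bm a,\bm b)$ as a combination of three $(\bm \zeta,\nabla)\bm \zeta$ terms with mixed signs, whereas the definition of $\FF$ only permits subtracted quadratic terms. In the trigonometric setting the required sign reversal can be produced via the $\cos\leftrightarrow\sin$ exchange symmetry of $\bm E$; for instance, the identity
\[
(\bm e_i\cos\langle\bm m,\bm x\rangle,\nabla)(\bm e_i\cos\langle\bm m,\bm x\rangle)+(\bm e_i\sin\langle\bm m,\bm x\rangle,\nabla)(\bm e_i\sin\langle\bm m,\bm x\rangle)=0
\]
lets one replace a positive $(\bm \zeta,\nabla)\bm \zeta$ term by the negative of its conjugate copy and thereby reorganise $B(\bm a,\bm b)$ into the prescribed form $\bm \eta-\sum_i(\bm \zeta^i,\nabla)\bm \zeta^i$ with $\bm \eta\in\bm E_n$.
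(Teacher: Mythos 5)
Your reduction to Fourier modes, the formula $B(\bm a\,\phi_{\bm m},\bm b\,\phi_{\bm m'})=i[(\bm a\cdot\bm m')\bm b+(\bm b\cdot\bm m)\bm a]\phi_{\bm m+\bm m'}$ and the rank-one determinant computation are fine; the gap sits exactly at the point you flag, and the fix you propose does not close it. Write $Q(\bm c)=(\bm c\cdot\nabla)\bm c$. Polarization forces you to show that the single \emph{positive} term $Q(\bm a+\bm b)$ lies in the admissible set $\{\bm\eta-\sum_i Q(\bm\zeta^i):\ \bm\eta,\bm\zeta^i\in\bm E_n\}$. The cancellation you invoke, $Q(\bm a_0\cos\langle\bm m,\cdot\rangle)+Q(\bm a_0\sin\langle\bm m,\cdot\rangle)=0$, is correct for a \emph{single} mode, but $\bm a+\bm b$ is a sum of two modes at different frequencies, and there the conjugate copy does not cancel: with $\bm a=\bm a_0\cos\langle\bm m,\bm x\rangle$, $\bm b=\bm b_0\cos\langle\bm m',\bm x\rangle$, $\tilde{\bm a}=\bm a_0\sin\langle\bm m,\bm x\rangle$, $\tilde{\bm b}=\bm b_0\sin\langle\bm m',\bm x\rangle$ one computes
\[
Q(\bm a+\bm b)+Q(\tilde{\bm a}+\tilde{\bm b})=\bigl[(\bm a_0\cdot\bm m')\,\bm b_0-(\bm b_0\cdot\bm m)\,\bm a_0\bigr]\sin\langle\bm m-\bm m',\bm x\rangle,
\]
a generically nonzero residual at the \emph{difference} frequency. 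Hence $Q(\bm a+\bm b)=-Q(\tilde{\bm a}+\tilde{\bm b})+(\text{mode at }\bm m-\bm m')$, which is of the prescribed form $\bm\eta-\sum_iQ(\bm\zeta^i)$ only if the difference-frequency modes already belong to $\bm E_n$ and can be absorbed into $\bm\eta$. What your argument actually proves is therefore the conditional rule: if $\bm m,\bm m'\in M_n$ \emph{and} $\bm m-\bm m'\in M_n$, then $\bm m+\bm m'\in M_{n+1}$ (the case $\bm m=\bm m'$, difference $0$, being frequency doubling). The unconditional claim $M_{n+1}\supset M_n+M_n$ is not established, and with it the conclusion ``iterate from $\pm\bm e_i$ to all of $\Z^3$'' collapses, since the frequencies you must add at a given stage need not have their difference available one level below.

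To close the gap you need an additional combinatorial selection of decompositions, which is precisely what the paper's proof supplies: it first obtains the doubled frequencies $2\bm n$ (difference zero; with a separate small argument for the components with $n_i=0$), and then reaches any $|\bm l|\le 2^{j}$ by writing $\bm l=\bm n+\bm m$ with $|\bm n|,|\bm m|\le 2^{j-1}$ and, crucially, $\bm n-\bm m\in\{-1,0,1\}^3$, so that the residual difference-frequency mode lies in $\bm E_1$ and can be absorbed into the $\bm\eta$ part. Once you add the hypothesis $\bm m-\bm m'\in M_n$ to your inductive rule and make such a choice of $\bm n,\bm m$, your complexified polarization/determinant argument becomes essentially a rewriting of the paper's explicit trigonometric identities; as it stands, the key inductive step is unproved.
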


It is straightforward to see that $\dim \bm E=45$.
\begin{proof}[Proof of Lemma \ref{1:L.Lexam}]
It suffices to show that
\begin{equation}\label{1:E:Hayk}
\textup{span}\{c^i_{\bm{m}},s^i_{\bm{m}} ,\quad |\bm m|\le
2^{j}\}\subset \bm E_{j+1}\quad\text{for all}\quad j\ge0,
\end{equation}
where $|\bm{m}|=|m_1|+|m_2|+|m_3|$. We prove (\ref{1:E:Hayk}) by
induction. The case $j=0$ is clear. We shall prove
(\ref{1:E:Hayk}) for $j\ge 1$ assuming that it is true for any
$j'<j$. If ${n_i}\neq0$, then it is easy to see
\begin{align}
s^i_{2\bm{n}}(\bm x)& = -\frac{2}{n_i}c^i_{\bm{n}}(\bm x)\cdot\nabla c^i_{\bm{n}}(\bm x),\nonumber\\
 -s^i_{2\bm{n}}(\bm x)&= -\frac{2}{n_i}s^i_{\bm{n}}(\bm x)\cdot\nabla s^i_{\bm{n}}(\bm x),\nonumber\\
c^i_{2\bm{n}}(\bm x)&= -\frac{1}{n_i}(s^i_{\bm{n}}(\bm
x)-c^i_{\bm{n}}(\bm x))\cdot\nabla
 (s^i_{\bm{n}}(\bm x)-c^i_{\bm{n}}(\bm x)),\nonumber\\
-c^i_{2\bm{n}}(\bm x)&= -\frac{1}{n_i}(s^i_{\bm{n}}(\bm
x)+c^i_{\bm{n}}(\bm x))\cdot\nabla  (s^i_{\bm{n}}(\bm
x)+c^i_{\bm{n}}(\bm x)). \nonumber
\end{align}
Thus $s^i_{2\bm{n}}(\bm x), c^i_{2\bm{n}}(\bm x)\in \bm E_{j+1}$
for any $|\bm n|\le 2^{j-1}$, $n_i\neq 0$. If $n_i=0$,  without
loss of generality, we can assume $n_1\neq0$, then
\begin{align}
s^1_{2\bm{n}}(\bm x)+s^i_{2\bm{n}}(\bm x)& =
-\frac{2}{n_1}(c^1_{\bm{n}}(\bm x)+c^i_{\bm{n}}(\bm x))
\cdot\nabla (c^1_{\bm{n}}(\bm x)+c^i_{\bm{n}}(\bm x)), \label{1:E.n1neq01}\\
-s^1_{2\bm{n}}(\bm x)-s^i_{2\bm{n}}(\bm x)& =
-\frac{2}{n_1}(s^1_{\bm{n}}(\bm x)+s^i_{\bm{n}}(\bm x))
\cdot\nabla (s^1_{\bm{n}}(\bm x)+s^i_{\bm{n}}(\bm
x)).\label{1:E.n1neq02}
 \end{align}
As $\pm s^1_{2\bm{n}}(\bm x)\in  \bm E_{j+1}$ and the right-hand
sides of (\ref{1:E.n1neq01}), (\ref{1:E.n1neq02}) are in are in
 $\bm E_{j+1}$, we get $s^i_{2\bm{n}}(\bm x)\in \bm E_{j+1}$ for
any $|\bm n|\le 2^{j-1}$, $i=1,2,3$. In the same way, we can show
that $c^i_{2\bm{n}}(\bm x)\in \bm E_{j+1}$. Now take $\bm l\in
\Z^3, |\bm l| \le 2^{j}$
 and let us choose $\bm n\in \Z^3, |\bm n| \le 2^{j-1}$ and $\bm m\in \Z^3, |\bm m| \le 2^{j-1}$ such that
\begin{align}
\bm l=\bm n+\bm m \text{ and }    c^i_{\bm n-\bm m},s^i_{\bm n-\bm
m}\in \bm E_1.  \nonumber
 \end{align}
For example, if $l=(l_1,l_2,l_3)$ and $l_1$ is even, we can take
$$\bm n=(\frac{l_1}{2},
\bigg[\frac{l_2}{2}\bigg],l_3-\bigg[\frac{l_3}{2}\bigg])\quad
\text{and}\quad
 \bm m=(\frac{l_1}{2},l_2-\bigg[\frac{l_2}{2}\bigg],\bigg[\frac{l_3}{2}\bigg]).$$
 A similar representation holds if
  $l_2$ or   $l_3$  is even. On the other hand, if all $l_i$ are odd, then necessarily $l_i\le 2^j-1$, and we can take
$$\bm n=(l_1-\bigg[\frac{l_1}{2}\bigg],
\bigg[\frac{l_2}{2}\bigg],l_3-\bigg[\frac{l_3}{2}\bigg])\quad
\text{and}\quad
 \bm m=(\bigg[\frac{l_1}{2}\bigg],l_2-\bigg[\frac{l_2}{2}\bigg],\bigg[\frac{l_3}{2}\bigg]).$$
  Using the identities
\begin{align}
(s^i_{\bm{n}}(\bm x)\pm s^i_{\bm{m}}(\bm x)
)\cdot\nabla(s^i_{\bm{n}}(\bm x)\pm s^i_{\bm{m}}(\bm x) )&=
\frac{n_i}{2}s^i_{\bm{2n}}(\bm x)+\frac{m_i}{2}s^i_{2\bm{m}}(\bm
x) \pm \frac{l_i} {2}s^i_{\bm{l}}(\bm x)\nonumber\\&\pm
\frac{n_i-m_i}
{2}s^i_{\bm{n-m}}(\bm x),\nonumber\\
(s^i_{\bm{n}}(\bm x)\pm c^i_{\bm{m}}(\bm x)
)\cdot\nabla(s^i_{\bm{n}}(\bm x)\pm c^i_{\bm{m}}(\bm x) )&=
\frac{n_i}{2}s^i_{\bm{2n}}(\bm x)-\frac{m_i}{2}s^i_{2\bm{m}}(\bm
x) \pm \frac{l_i} {2}c^i_{\bm{l}}(\bm x)\nonumber\\&\pm
\frac{n_i-m_i} {2}c^i_{\bm{n-m}}(\bm x),\nonumber
\end{align}
we obtain that if $l_i\neq0$, then $s^i_{\bm{l}}(\bm
x),c^i_{\bm{l}}(\bm x)\in \bm E_{j+1}$ for any $|\bm l|\le 2^{j}$,
$i=1,2,3$.
 Arguing as above, we can easily prove that also in the case $l_i=0$
we have $s^i_{\bm{l}}(\bm x),c^i_{\bm{l}}(\bm x)\in \bm E_{j+1}$.
\end{proof}

\subsection{Proof of Theorem \ref{1:T.2.1}}\label{1:S.3.2}

We shall need the concept of $(\e,\bm u_0, g_0,\bm
K)$-controllability of the system. Let us fix  constants
$\e,\al>0$, an initial point $(\bm u_0, g_0)\in \bm H^k\times
G^k_\alpha$,
 a compact set $\bm K\subset \bm H^k\times G^k_\alpha$ and a vector space  $\bm X\subset L^2(J_T,\bm H^k)$.
\begin{definition} We say that system (\ref{1:E0c:eul1}), (\ref{1:E0c:eul2})  with $\bm\eta\in \bm X$ is
  $(\e,\bm u_0, g_0,\bm K)$-controllable at
  time $T>0$ if there is a continuous mapping
$$\bm\Psi:\bm K\rightarrow  \Theta(\bm
u_0,g_0,\bm f)\cap \bm X $$  such that
\begin{eqnarray}
\sup_{(\hat{\bm u},\hat g)\in \bm K} \|\RR_T(\bm u_0,g_0, 0,0, \bm
\Psi(\hat{\bm u}, \hat{g}))-(\hat{\bm u},\hat g)\|_{\bm H^k\times
H^k}<\e,\nonumber
\end{eqnarray}
where  $ \Theta(\bm u_0,g_0,\bm f) \cap \bm X$ is endowed with the
norm of $L^2(J_T,\bm H^k)$.
\end{definition}
The proof of Theorem \ref{1:T.2.1} is deduced from the following
result.
\begin{theorem}\label{1:T.2.121}  If $\bm E\subset \bm H^{k+2}$, $k\ge 4$
is a finite-dimensional subspace such that $\bm E_\infty$ is dense
in $\bm H^{k+1}$, then   for any  $\e>0$,
 $(\bm u_0, g_0)\in \bm H^k\times G^k_\alpha$
 and  $\bm K\subset \bm H^k\times G^k_\alpha$  system (\ref{1:E0c:eul1}), (\ref{1:E0c:eul2})
with $\bm\eta\in C^\infty(J_T,\bm E)$ is
  $(\e,\bm u_0, g_0,\bm K)$-controllable at time  $T>0$.
\end{theorem}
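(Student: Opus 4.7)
The plan is to follow the Agrachev--Sarychev scheme laid out in the Introduction, proving by decreasing induction on $n$ that $(\varepsilon,\bm u_0,g_0,\bm K)$-controllability holds with $\bm\eta\in C^\infty(J_T,\bm E_n)$ for every $n\ge 0$; the case $n=0$ is the theorem. The top of the induction tower is furnished by density of $\bm E_\infty$ in $\bm H^{k+1}$ together with the continuous dependence from Theorem~\ref{1:T:pert}; each descent from $n+1$ to $n$ combines a convexification via rapidly oscillating auxiliary controls, the continuity of the resolving operator from Theorem~\ref{1:T.te2}, and the change of variable $\bm v=\bm u+\bm\xi$ that reduces the doubly controlled system (\ref{1:E.I11})--(\ref{1:E.I21}) to (\ref{1:E0c:eul1})--(\ref{1:E0c:eul2}).

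For the base case I construct, for each target $(\hat{\bm u},\hat g)\in\bm K$, a smooth $\bm H^{k+1}$-valued control that steers $(\bm u_0,g_0)$ to within $H^k$-distance $\varepsilon/4$ of the target and depends continuously on it. The construction goes through the auxiliary system (\ref{1:E.I11})--(\ref{1:E.I21}): after briefly mollifying the initial and final states into $\bm H^{k+2}$, join them by a smooth path $(\bm u(t),g(t))$ preserving the mass constraint $\int e^{g(t)}\dd\bm x=\alpha$, and read off from the equations the auxiliary controls $(\bm\xi,\bm\eta)$ for which this path is a solution; then $\bm v=\bm u+\bm\xi$ together with $\bm\eta^*:=\bm\eta+\partial_t\bm\xi$ yields a solution of (\ref{1:E0c:eul1})--(\ref{1:E0c:eul2}) with a single $\bm H^{k+1}$-valued control. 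Approximating $\bm\eta^*$ in $L^2(J_T,\bm H^{k+1})$ by a sequence of $\bm E_N$-valued smooth controls via density of $\bm E_\infty$, and invoking Theorem~\ref{1:T:pert}(ii)--(iii) to transfer the approximation to the endpoint, I obtain the desired continuous $\bm\Psi_N:\bm K\to C^\infty(J_T,\bm E_N)$.

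For the descent step, assume a continuous $\bm\Psi_{n+1}:\bm K\to C^\infty(J_T,\bm E_{n+1})$ of accuracy $\varepsilon/2$ is given. Using polarization and the finite-dimensionality of $\bm E_n$ and $\bm E_{n+1}$, decompose
\begin{equation}
\bm\Psi_{n+1}(\hat{\bm u},\hat g)(t)=\bm\eta(t)-\sum_{i=1}^m(\bm\zeta^i(t)\cdot\nabla)\bm\zeta^i(t)\nonumber
\end{equation}
with $\bm\eta,\bm\zeta^i\in C^\infty(J_T,\bm E_n)$ chosen continuously in the target. Introduce the oscillating auxiliary control
\begin{equation}
\bm\xi_\lambda(t):=\sum_{i=1}^m\varphi_i(\lambda t)\bm\zeta^i(t),\nonumber
\end{equation}
where the smooth $1$-periodic functions $\varphi_i$ are mean-zero and satisfy $\int_0^1\varphi_i\varphi_j\,\dd s=\delta_{ij}$. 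This $\bm\xi_\lambda$ is uniformly bounded in $C(J_T,\bm H^{k+2})$ and meets the oscillation hypothesis (\ref{1:EE.xihatk}) of Theorem~\ref{1:T.te2}; accordingly, solving (\ref{1:E.I11})--(\ref{1:E.I21}) with $\bm\xi_\lambda$ in both equations and control $\bm\eta$ is arbitrarily close in $\bm Y^k$ to solving (\ref{1:E.I12})--(\ref{1:E.I22}) with the same data. A direct averaging argument based on the energy estimate (\ref{1:E:nortpah}) then shows that as $\lambda\to\infty$ the linear-in-$\bm\xi_\lambda$ terms in the velocity equation average to zero while $(\bm\xi_\lambda\cdot\nabla)\bm\xi_\lambda$ averages to $\sum_i(\bm\zeta^i\cdot\nabla)\bm\zeta^i$, so that the solution converges in $\bm Y^k$ to the one driven by $\bm\Psi_{n+1}(\hat{\bm u},\hat g)$. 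Finally, the change of variable $\bm v=\bm u+\bm\xi_\lambda$ turns the (\ref{1:E.I11})--(\ref{1:E.I21}) solution with $(\bm\xi_\lambda,\bm\eta)\in\bm E_n\times\bm E_n$ into a solution of (\ref{1:E0c:eul1})--(\ref{1:E0c:eul2}) with control $\bm\eta+\partial_t\bm\xi_\lambda\in C^\infty(J_T,\bm E_n)$; taking $\lambda$ sufficiently large yields the required $\bm\Psi_n$.

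The hard part will be uniformity: a single oscillation parameter $\lambda$ and a continuous decomposition $(\bm\eta,\bm\zeta^i)$ of $\bm\Psi_{n+1}$ must be chosen to work simultaneously for every $(\hat{\bm u},\hat g)\in\bm K$. Continuity in the target is achieved by exploiting the linear algebra of the finite-dimensional spaces $\bm E_n,\bm E_{n+1}$ (polarization together with a continuous right inverse defined on a fixed finite-dimensional complement of the kernel of the decomposition map). Uniformity of the convergence rate as $\lambda\to\infty$ relies on the fact that the constants in Theorems~\ref{1:T:pert} and~\ref{1:T.te2} depend only on the $\bm X^k$- and $C(J_T,\bm H^{k+2})$-norms of the data, both of which are bounded uniformly on $\bm K$ by construction, so a partition-of-unity patching produces $\bm\Psi_n$ with uniform error $\varepsilon$. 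A finite number of such descents from $N$ down to $0$ then completes the proof.
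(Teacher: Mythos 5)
Your plan reproduces the overall architecture of the paper's proof of Theorem~\ref{1:T.2.121}: the Agrachev--Sarychev tower $\bm E\subset\bm E_1\subset\cdots$, the explicit interpolating path $(\bm u_\mu(t),g_\mu(t))$ preserving $\int e^{g}\,\dd\bm x=\alpha$ with the auxiliary control $\bm\xi_\mu$ read off from a Poisson equation, the change of variable $\bm v=\bm u+\bm\xi$ reducing (\ref{1:E.I11})--(\ref{1:E.I21}) to (\ref{1:E0c:eul1})--(\ref{1:E0c:eul2}), and the descent step based on fast oscillation, Theorem~\ref{1:T.te2} (to remove $\bm\xi$ from the transport equation), Theorem~\ref{1:T:pert} (to transfer closeness to the resolving operator), and a final change of variables. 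Your choice of oscillating control $\bm\xi_\lambda(t)=\sum_i\varphi_i(\lambda t)\bm\zeta^i(t)$ with mean-zero $L^2$-orthonormal $\varphi_i$ is a clean alternative realization of the relaxed-control limit that would also satisfy hypothesis (\ref{1:EE.xihatk}); the paper instead uses a piecewise-constant $1$-periodic $\bm\zeta(nt/T)$ built with the antisymmetry $\bm\zeta(t)=-\bm\zeta(t-\tfrac12)$.

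The genuine gap is in the descent step's decomposition. You assert that ``polarization and finite-dimensionality'' together with ``a continuous right inverse defined on a fixed finite-dimensional complement of the kernel of the decomposition map'' give, continuously in $(\hat{\bm u},\hat g)$, a representation $\bm\Psi_{n+1}(\hat{\bm u},\hat g)(t)=\bm\eta(t)-\sum_i(\bm\zeta^i(t)\cdot\nabla)\bm\zeta^i(t)$ with $\bm\eta,\bm\zeta^i\in C^\infty(J_T,\bm E_n)$. But the decomposition map $(\bm\eta,\bm\zeta^1,\ldots,\bm\zeta^m)\mapsto\bm\eta-\sum(\bm\zeta^i\cdot\nabla)\bm\zeta^i$ is quadratic in the $\bm\zeta$'s, so ``kernel'' and ``complement'' in the linear-algebra sense do not give a right inverse, and polarization produces \emph{signed} combinations $(\bm\zeta^1\cdot\nabla)\bm\zeta^2+(\bm\zeta^2\cdot\nabla)\bm\zeta^1=\tfrac12\big[(\bm\zeta^1+\bm\zeta^2)\cdot\nabla(\bm\zeta^1+\bm\zeta^2)-\sum_i(\bm\zeta^i\cdot\nabla)\bm\zeta^i\big]$, whereas the time average of $(\bm\xi_\lambda\cdot\nabla)\bm\xi_\lambda$ is $\sum_{i,j}\langle\varphi_i,\varphi_j\rangle(\bm\zeta^i\cdot\nabla)\bm\zeta^j$ with Gram matrix $\langle\varphi_i,\varphi_j\rangle$ positive semidefinite, so fast oscillation can only produce points in the closed convex cone generated by $\{(\bm\zeta\cdot\nabla)\bm\zeta:\bm\zeta\in\bm E_n\}$. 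When the basis coefficients of $\bm\Psi_{n+1}(t)$ change sign this is exactly the obstruction. The paper circumvents it with Lemma~\ref{1:L3.1} (after Shirikyan), which replaces the control by one that is piecewise constant in $t$ and at each time a \emph{convex} combination $\sum_l c_{l,r}\bm\eta_1^l$ of a fixed finite set $\{\bm\eta_1^l\}\subset\bm E_1$; the convexity of the $c_{l,r}$, combined with the antisymmetric doubling $\bm\zeta^{i+n}=-\bm\zeta^i$ in (\ref{1:E.lemma1}), yields nonnegative coefficients and kills the linear terms, after which Step~2 reduces to $s=1$ by iterating on subintervals. Without this reduction, you would at minimum need to spell out a concrete continuous selection that respects the nonnegativity constraint (e.g.\ by splitting each basis coefficient into $a_j^\pm$ and using separate decompositions of $b_j$ and $-b_j$, which is only continuous, not smooth, at sign changes). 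As written, the step ``chosen continuously in the target'' is unproved and is exactly the hard point the paper spends its Lemma~\ref{1:L3.1} and Step~2 on.

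A second, smaller omission: when you set $\bm\eta^*:=\bm\eta+\partial_t\bm\xi$ and claim $\bm v=\bm u+\bm\xi$ solves (\ref{1:E0c:eul1})--(\ref{1:E0c:eul2}), the initial and final conditions are shifted by $\bm\xi(0)$ and $\bm\xi(T)$; you need to first replace $\bm\xi$ by a nearby $\bm\xi^\delta$ vanishing at the endpoints, as the paper does with $\bm\xi_\mu^\delta$ and ${\bm\zeta}_n^m$. This is easy to fix but it is not automatic.
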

Taking this assertion for granted, let  us complete the proof of
Theorem \ref{1:T.2.1}.  Suppose  $\e$ and $ \alpha$ are  positive
constants, $\bm F:\bm H^k\times G^k_\alpha\rightarrow \R^N$ is a
continuous function and $( \bm{ u_1},g_1)$ is a regular point for
$\bm F$. Thus, there is a closed ball $\bm B\subset \R^N$ centred
at $\bm{ y_1} = F( \bm{ u_1},g_1)$ of radius $r>0$ and a
continuous function $\bm G:\bm B\rightarrow \bm H^k\times
G^k_\alpha$ such that $\bm G( \bm{ u_1},g_1) =( \bm{ u_1},g_1)$
and $\bm F(\bm G(\bm y)) = \bm y$ for any $\bm y\in \bm B$.
Without loss of generality, we can assume that $\bm B$ is such
that
\begin{eqnarray}\label{1:E1.astxanish2}\sup_{\bm y\in \bm B}\|\bm G(\bm y)-( \bm{ u_1},g_1)
\|_{\bm H^k\times H^k}\le \frac{\e}{2}.\end{eqnarray}   Let us
choose a constant $0<\e_0<\e$ such that
\begin{align}\label{1:E:fihatk1}
\|\bm F(\hat {\bm y})-\bm F(\tilde {\bm y})\|_{\R^N}<r
 \text{ for any }  \hat {\bm y}, \tilde {\bm y}\in B, \|\hat {\bm y}- \tilde {\bm y}\|_{\bm H^k\times H^k}\le \frac{\e_0}{2}.
 \end{align}
  Since $\bm K:=\bm G(\bm B)$ is a compact subset of
$\bm H^k\times G^k_\alpha$, Theorem \ref{1:T.2.121} implies that
there  is a continuous mapping $\bm\Psi :\bm K \rightarrow
\Theta(\bm u_0,g_0,\bm f)\cap \bm X$ such that
\begin{eqnarray}\label{1:E1.astxanish} \sup_{(\hat {\bm u}, \hat
g)\in \bm K} \|\RR_T(\bm u_0,g_0,0,0,\bm\Psi(\bm{\hat u},\hat
g))-(\hat {\bm u}, \hat g)\|_{\bm H^k\times H^k}< \frac{\e_0}{2}.
\end{eqnarray} Therefore, the
continuous mapping $$\bm\Phi:\bm B\rightarrow \R^N, \quad \bm y
\rightarrow \bm F\big(\RR_T(\bm u_0,g_0,0,0, \bm\Psi \circ \bm G
(\bm y))\big)$$ satisfies the inequality
\begin{eqnarray}
\sup_{\bm y\in \bm B} \|\bm \Phi (\bm y)- \bm y\|_{\R^N}=\sup_{\bm
y\in \bm B} \|\bm F\big(\RR_T(\bm u_0,g_0, 0,0,\bm\Psi \circ \bm G
(\bm y))\big)- \bm F(\bm G(\bm y))\|_{\R^N} <{r}.\nonumber
\end{eqnarray}
  Applying the  Brouwer
theorem, we see that the mapping $ \bm y \rightarrow \bm {
y_1}+\bm y -\bm\Phi(\bm y )$ from $\bm B$ to $\bm B$ has a fixed
point $ \overline{ \bm y}\in \bm B$. Thus
\begin{eqnarray}
 \bm F(\RR_T(\bm u_0,g_0,0,0, \bm\Psi \circ \bm G (\overline{\bm y})))=\bm F(\bm{u}_1, g_1).\nonumber\end{eqnarray}
Using (\ref{1:E1.astxanish2}) and (\ref{1:E1.astxanish}), we
obtain
\begin{multline}
\|\RR_T(\bm u_0,g_0,0,0,\bm\Psi(\bm G (\overline{ \bm
y})))-(\bm{u}_1, g_1)\|_{\bm H^k\times H^k}\\\le\|\RR_T(\bm
u_0,g_0,0,0,\bm\Psi(\bm G (\overline{ \bm y})))-\bm G (\overline{
\bm y})\|_{\bm H^k\times H^k}\nonumber+\|\bm G (\overline{ \bm
y})- (\bm{u}_1, g_1)\|_{\bm H^k\times H^k} <\e .\nonumber
\end{multline}
This completes the proof.

\section{Proof of Theorem \ref{1:T.2.121}}\label{1:S.4}

\subsection{Reduction to controllability with $\bm E_1$-valued controls }
Theorem \ref{1:T.2.121} is derived from the proposition below,
which is established in Subsection \ref{1:S.4.2}.
\begin{proposition}\label{1:P.2} Suppose that $\bm
E\subset \bm H^{k+2}$ is a finite-dimensional subspace.
 Then   system (\ref{1:E0c:eul1}), (\ref{1:E0c:eul2})  is
$(\e,\bm u_0, g_0,\bm K)$-controllable  with $\bm \eta \in
C^\infty(J_T,\bm E_1)$ if and only if it is  $(\e,\bm u_0, g_0,\bm
K)$-controllable  with $\bm \eta \in C^\infty(J_T,\bm E)$.
\end{proposition}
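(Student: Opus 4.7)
The implication ``$\bm E$-controllable $\Rightarrow$ $\bm E_1$-controllable'' is immediate: since $\bm E\subset\bm E_1$ we have $C^\infty(J_T,\bm E)\subset C^\infty(J_T,\bm E_1)$, so any continuous map $\bm\Psi:\bm K\to C^\infty(J_T,\bm E)$ realising $(\e,\bm u_0,g_0,\bm K)$-controllability also does so with values in $C^\infty(J_T,\bm E_1)$. The content of the proposition lies in the reverse implication, which I will derive from the Agrachev--Sarychev oscillation scheme combined with Theorems~\ref{1:T:pert} and~\ref{1:T.te2}. Assume that $\bm\Psi_1:\bm K\to\Theta(\bm u_0,g_0,\bm f)\cap C^\infty(J_T,\bm E_1)$ is a continuous map realising the $\bm E_1$-controllability, and write $\bm\eta_1=\bm\Psi_1(\hat{\bm u},\hat g)\in C^\infty(J_T,\bm E_1)$.

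Using the definition $\bm E_1=\FF(\bm E)$ (and fixing decompositions of a basis of $\bm E_1$ once and for all, then rescaling scalar coefficients into the $\bm\zeta^i$'s so as to preserve the sign in $-\sum(\bm\zeta^i\cdot\nabla)\bm\zeta^i$), I will produce smooth curves $\bm\eta,\bm\zeta^1,\ldots,\bm\zeta^N\in C^\infty(J_T,\bm E)$, depending continuously on $\bm\eta_1$, with
\[
\bm\eta_1(t)\;=\;\bm\eta(t)-\sum_{i=1}^{N}\bigl(\bm\zeta^i(t)\cdot\nabla\bigr)\bm\zeta^i(t),\qquad t\in J_T.
\]
Next, fix a cutoff $\phi\in C^\infty(J_T,[0,1])$ with $\phi(0)=\phi(T)=0$ and $\phi\equiv1$ outside an arbitrarily small neighborhood of $\{0,T\}$, and $2\pi$-periodic zero-mean functions $\alpha_1,\ldots,\alpha_N$ satisfying $\frac{1}{2\pi}\int_0^{2\pi}\alpha_i(s)\alpha_j(s)\,\dd s=\delta_{i,j}$; set
\[
\bm\xi_n(t):=\phi(t)\sum_{i=1}^{N}\alpha_i(nt)\,\bm\zeta^i(t)\in C^\infty(J_T,\bm E).
\]
Then $\bm\xi_n(0)=\bm\xi_n(T)=0$, the sequence $(\bm\xi_n)$ satisfies hypothesis~(\ref{1:EE.xihatk}) of Theorem~\ref{1:T.te2} (by integration by parts in $t$), and the time-average of $(\bm\xi_n\cdot\nabla)\bm\xi_n$ is asymptotically $\phi(t)^2\sum_{i=1}^{N}(\bm\zeta^i(t)\cdot\nabla)\bm\zeta^i(t)$.

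The proof is then completed by chaining three equivalences. \emph{(i)} The change of variable $\bm v:=\bm u+\bm\xi_n$ maps the original system (\ref{1:E0c:eul1})--(\ref{1:E0:eulc}) driven by the $\bm E$-valued control $\bm\eta+\p_t\bm\xi_n$ to the auxiliary system (\ref{1:E.I11})--(\ref{1:E.I21}) driven by $(\bm\xi_n,\bm\eta)\in\bm E\times\bm E$, with matching initial and terminal states because $\bm\xi_n(0)=\bm\xi_n(T)=0$. \emph{(ii)} Theorem~\ref{1:T.te2}, applied with the $\bm\zeta$-slot fixed to $\bm\xi_n$ and the $\bm\xi$-slot switched from $\bm\xi_n$ to $0$, shows that solutions of (\ref{1:E.I11})--(\ref{1:E.I21}) converge in $\bm Y^k$ to those of (\ref{1:E.I12})--(\ref{1:E.I22}). \emph{(iii)} An averaging argument for the momentum equation---decompose $((\bm u+\bm\xi_n)\cdot\nabla)(\bm u+\bm\xi_n)=(\bm u\cdot\nabla)\bm u+(\bm u\cdot\nabla)\bm\xi_n+(\bm\xi_n\cdot\nabla)\bm u+(\bm\xi_n\cdot\nabla)\bm\xi_n$, observe that the cross terms vanish weakly against slow fields while $(\bm\xi_n\cdot\nabla)\bm\xi_n$ averages to $\phi^2\sum_i(\bm\zeta^i\cdot\nabla)\bm\zeta^i$, and invoke the continuity estimate of Theorem~\ref{1:T:pert}---shows that solutions of (\ref{1:E.I12})--(\ref{1:E.I22}) converge to those of the original system with the modified control $\bm\eta-\phi^2\sum_i(\bm\zeta^i\cdot\nabla)\bm\zeta^i$. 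A final application of Theorem~\ref{1:T:pert} (letting $\phi\to1$ in $L^2$) yields convergence to the solution driven by $\bm\eta_1$ itself.

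The main obstacle will be the averaging in step~\emph{(iii)}: the oscillation trick must produce its limit in the strong norm used by the resolving operator, uniformly over $(\hat{\bm u},\hat g)\in\bm K$. This is precisely where Theorems~\ref{1:T:pert} and~\ref{1:T.te2} do the essential work---the former quantifies how perturbations of $(\bm u_0,g_0,\bm\zeta,\bm\xi,\bm f)$ propagate through the nonlinear flow, while the latter absorbs the fast oscillation of $\bm\xi_n$ in the continuity equation. Continuity of the candidate $\bm\Psi(\hat{\bm u},\hat g):=\bm\eta+\p_t\bm\xi_n\in C^\infty(J_T,\bm E)$ in $(\hat{\bm u},\hat g)$ is inherited from the continuity of $\bm\Psi_1$ and of the decomposition, and the compactness of $\bm K$ permits the oscillation index $n$ to be chosen uniformly large, completing the proof.
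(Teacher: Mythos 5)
Your overall strategy is the right one (it is the Agrachev--Sarychev scheme the paper itself uses: decompose the $\bm E_1$-control via $\FF(\bm E)$, inject a fast-oscillating $\bm E$-valued field vanishing at $t=0,T$, remove it from the transport equation by Theorem~\ref{1:T.te2}, and return to the original system through the control $\bm\eta+\p_t\bm\xi_n$), but the step you yourself call the main obstacle --- step \emph{(iii)} --- is a genuine gap, not a detail. The difference between $((\bm u+\bm\xi_n)\cdot\nabla)(\bm u+\bm\xi_n)$ and its time-average $(\bm u\cdot\nabla)\bm u+\phi^2\sum_i(\bm\zeta^i\cdot\nabla)\bm\zeta^i$ tends to zero only \emph{weakly} in time, while Theorem~\ref{1:T:pert} estimates $\RR(\bm U_1)-\RR(\bm U_2)$ by the \emph{strong} $\bm X^{k}$ (or $\bm X^{k-1}$) norm of $\bm U_1-\bm U_2$; citing its ``continuity estimate'' cannot convert weak convergence of the coefficients into convergence of the solutions, and Theorem~\ref{1:T.te2} only treats the $\bm\xi$-slot of the transport equation, not the quadratic term in the momentum equation. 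The paper closes exactly this gap with an extra device you omit: it forms the residual $\bm f_n$, uses the relaxation property that its primitive $\KK\bm f_n\to0$ in $C(J_T,\bm H^{k+1})$ (Jurdjevic; \cite{hn}), and performs the change of unknown $\bm v_n=\bm u_1-\KK\bm f_n$, which turns the weakly-null forcing into a strongly-null perturbation of the $\bm\zeta$- and $\bm\xi$-slots to which Theorem~\ref{1:T:pert} \emph{does} apply. Without this (or an analogue of Theorem~\ref{1:T.te2} for the momentum equation) your chain of equivalences does not close.

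Two further points where your argument diverges from what is actually needed. First, Theorem~\ref{1:T.te2} requires data and solutions at regularity $k+1$ (inputs in $\bm X^{k+1}$, solutions in $\bm Y^{k+1}$), whereas the proposition only gives $(\bm u_0,g_0)$ and $\bm K$ in $\bm H^k\times G^k_\alpha$; the paper inserts a preliminary reduction (Lemma~\ref{1:LL.reguly}, stated for $\bm H^{k+2}$ data, combined with an approximation of $(\bm u_0,g_0)$ by smooth data and Theorem~\ref{1:T:pert}(iii) on the compact set $\bm\Psi(\bm K)\cup\bm\Psi_1(\bm K)$), and this also absorbs the loss of one derivative in the Lipschitz estimate of Theorem~\ref{1:T:pert}(ii); your proposal ignores this bookkeeping. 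Second, your pointwise-in-time decomposition of $\bm\eta_1(t)$ by ``fixing basis decompositions and rescaling coefficients into the $\bm\zeta^i$'' is not legitimate as stated: the map $\bm\zeta\mapsto(\bm\zeta\cdot\nabla)\bm\zeta$ is quadratic, so sign-changing coefficients cannot simply be absorbed, and square roots of nonnegative coefficients need not be smooth. The paper avoids this by Lemma~\ref{1:L3.1}, which replaces $\bm\Psi_1$ by piecewise-constant controls built from a \emph{fixed finite set} of vectors of $\bm E_1$ with nonnegative convex coefficients depending continuously on $(\hat{\bm u},\hat g)$, followed by the symmetrized $\pm\bm\zeta$ convexification and a chattering (piecewise-constant in fast time) $\bm\zeta_n$; this simultaneously settles the sign issue and the uniformity over $\bm K$ that your construction must also provide.
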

\begin{proof}[Proof of Theorem \ref{1:T.2.121}]
In view of Proposition \ref{1:P.2}, it suffices to prove that
there is an integer $N\ge 1$, depending only on $\e $, $\bm u_0 $,
$g_0$ and $\bm K$, such that (\ref{1:E0c:eul1}),
(\ref{1:E0c:eul2})  with $\bm\eta\in C^\infty(J_T,\bm E_N)$ is
$(\e,\bm u_0,g_0,\bm K)$-controllable  at time $T$. For any
$\mu>0$    and $(\hat {\bm {u}},\hat g)\in \bm K$  let us define
\begin{align}
\bm {u}_{\mu}(t; \hat {\bm {u}}) &= T^{-1}(te^{-\mu\Delta}\hat
{\bm { u}} + (T - t)e^{-\mu\Delta} \bm u_0),\label{1:E:umuikar}\\
g_{\mu}(t;\hat g)&=\ln(T^{-1}(te^{\varphi_\mu(\hat g)} + (T-
t)e^{\varphi_\mu(g_0)})), \label{1:E:gmuikar}
\end{align}
where  $\varphi_\mu(g)\in G^{k+1}_\alpha$ is such that
$\varphi_\mu(g)\rightarrow g$ as $\mu\rightarrow 0$ for all $g\in
G^{k}_\alpha$. For example, we can take
$$\varphi_\mu(g)=\ln(\frac{\alpha}{\int \exp(e^{-\mu\Delta}g(\bm x))\dd \bm x})+e^{-\mu\Delta}g .$$

  \vspace{6pt}  \textbf{Step 1.}
In this step, we show that there are controls $\bm \eta_\mu \in
C^\infty(J_T, \bm H^k)$ and  $\bm \xi_\mu \in C^\infty(J_T, \bm
H^{k+1})$ satisfying
\begin{align}\label{1:E:dzitaih2} \RR( e^{-\mu\Delta}\bm u_0 ,
\varphi_\mu(g_0),\bm \xi_\mu,\bm \xi_\mu,\bm \eta_\mu)=(\bm
u_{\mu},g_{\mu}).\end{align} We first construct $\bm \xi_\mu
 \in C^\infty(J_T, \bm
H^{k+1}) $ such that
\begin{align}\label{1:E:dzitaih}
\p_tg_{\mu}+ ((\bm u_{\mu}+\bm \xi_\mu)\cdot \nabla)
g_{\mu}+\nabla\cdot (\bm u_{\mu}+\bm \xi_\mu)=0.
\end{align}
To this end, let us multiply (\ref{1:E:dzitaih}) by $e^{g_{\mu}} $
and perform  some simple transformations. We get
\begin{align}\label{1:E:dzitaih22}
\nabla\cdot (e^{g_{\mu}}\bm \xi_\mu)=- \p_te^{g_{\mu}}-\nabla\cdot
(e^{g_{\mu}} \bm u_{\mu}).
\end{align}   We seek a solution of this equation in the
form $e^{g_{\mu}}\bm \xi_\mu=\nabla \psi_\mu$. Substituting this
into    (\ref{1:E:dzitaih22}), we get
$$\Delta\psi_\mu=- \p_te^{g_{\mu}}-\nabla\cdot
(e^{g_{\mu}} \bm u_{\mu}).$$ This equation   has a solution $\bm
\psi_\mu
 \in C^\infty(J_T, \bm
H^{k+2})$ if and only if the integral of the right-hand side over
$\T^3$ is zero. The definitions of $\bm u_{\mu}, g_{\mu} $ imply
that
$$\int( \p_te^{g_{\mu}}+\nabla\cdot
(e^{g_{\mu}} \bm u_{\mu}))\dd \bm x = \p_t \int e^{g_{\mu}}\dd \bm
x =\p_t \al=0.$$ Thus  (\ref{1:E:dzitaih22}) has a solution $\bm
\xi_\mu
 \in C^\infty(J_T, \bm
H^{k+1})$.    Since $\|e^{-\mu \Delta} \bm
u_0\|_{k+1},\|\ph_{\mu}(g_0)\|_{k+1}$ are bounded with respect to
$\mu\in (0,1)$, the constructions of $\bm {u}_{\mu}$ and $g_{\mu}$
 imply that   $\|\p_te^{g_{\mu}}-\nabla\cdot
(e^{g_{\mu}} \bm u_{\mu})\|_k$ is also bounded. Thus
$\|\psi_\mu\|_{k+2}$ is bounded, which implies the boundedness of
$\|\bm \xi_\mu\|_{k+1} $.
 If we define
\begin{align}\label{1:e:ettaiban}
\bm{\eta}_\mu&=\p_t{\bm{u}_\mu}+((\bm{u}_\mu+\bm
\xi_\mu)\cdot\nabla)(\bm{u}_\mu+\bm \xi_\mu)+ h(g_\mu)\nabla
g_\mu-\bm{f},
\end{align}
then $\bm{\eta}_\mu\in C^\infty(J_T, \bm H^{k}) $ and
(\ref{1:E:dzitaih2}) holds.

  \vspace{6pt}  \textbf{Step 2.}  Let us take some  functions $\bm \xi_\mu^\delta\in C^\infty(J_T, \bm
H^{k+1}) $ such that $\bm \xi_\mu^\delta(0)=\bm
\xi_\mu^\delta(T)=0$ and
 \begin{align}
\|\bm \xi_\mu^\delta-\bm \xi_\mu\|_{L^2(J_T,H^{k+1})}\rightarrow0
\text { as } \delta \rightarrow0.
 \end{align}
  Using the constructions of $\bm \xi_\mu,\bm
\eta_\mu$ and the fact that
\begin{align}
(\bm {u}_{\mu}(T; \hat {\bm {u}}), g_{\mu}(T;\hat
g))=(e^{-\mu\Delta}\hat {\bm { u}},\ph_\mu(\hat g)),\nonumber
\end{align}
we have
\begin{align}\label{1:E:avel1:1}\RR_T( e^{-\mu\Delta}\bm u_0 , \varphi_\mu(g_0),\bm
\xi_\mu,\bm \xi_\mu,\bm \eta_\mu)=(e^{-\mu\Delta}\hat {\bm {
u}},\ph_\mu(\hat g)).\end{align} On the other hand, $\bm
\xi_\mu^\delta(0)=\bm \xi_\mu^\delta(T)=0$ implies
\begin{align}\label{1:E:avel1:2}
\RR_T( e^{-\mu\Delta}\bm u_0 , \varphi_\mu(g_0),\bm
\xi^\delta_\mu,\bm \xi^\delta_\mu,\bm \eta_\mu)=\RR_T(
e^{-\mu\Delta}\bm u_0,\varphi_\mu(g_0),0,0,\bm \eta_\mu-\p_t\bm
\xi^\delta_\mu)\end{align}
 Then, by Theorem \ref{1:T:pert}, (\ref{1:E:avel1:1}) and (\ref{1:E:avel1:2}), we obtain
 \begin{align} \label{1:E:deltasahm}
&\| \RR_T( e^{-\mu\Delta}\bm u_0 , \varphi_\mu(g_0),\bm
\xi_\mu,\bm \xi_\mu,\bm \eta_\mu)-\RR_T( e^{-\mu\Delta}\bm u_0 ,
\varphi_\mu(g_0),\bm \xi^\delta_\mu,\bm \xi^\delta_\mu,\bm
\eta_\mu)\|_{\bm H^k\times H^k}\nonumber\\&= \|
(e^{-\mu\Delta}\hat {\bm { u}},\ph_\mu(\hat g))- \RR_T(
e^{-\mu\Delta}\bm u_0,\varphi_\mu(g_0), 0,0,\bm \eta_\mu-\p_t\bm
\xi^\delta_\mu)\|_{\bm H^k\times H^k} \rightarrow0 \end{align}
 as $\delta \rightarrow0.$  Clearly
\begin{align}\label{1:E:nyuisahmn}
\|(e^{-\mu\Delta}\bm u_0,\varphi_\mu(g_0))-(\bm u_0,g_0)\|_{\bm
H^k\times H^k}+\|(e^{-\mu\Delta}\hat {\bm { u}},\ph_\mu(\hat
g))-(\hat {\bm { u}},\hat g) \|_{\bm H^k\times H^k}\rightarrow0
\end{align}
 as $\mu \rightarrow0$. The fact that $\bm
E_\infty$ is dense in $\bm H^{k+1}$ implies that
\begin{align}\label{1:E:Nisahman} \sup _{(\hat {\bm u },\hat {g})\in \bm K}\|P_{\bm E_N}
(\bm \eta_\mu-\p_t\bm \xi^\delta_\mu) - (\bm \eta_\mu-\p_t\bm
\xi^\delta_\mu)\|_{L^2(J_T,\bm H^{k+1})} \rightarrow 0 \,\,
\text{as}\,\,N \rightarrow \infty.
\end{align}
  Since $\|\bm \xi_\mu\|_{k+1} $ is bounded uniformly with respect
to $\mu\in (0,1)$, equation (\ref{1:e:ettaiban}) implies that
$\|\bm \eta_\mu\|_k$ is also bounded. Taking time derivative of
(\ref{1:E:dzitaih22}), we can show the boundedness of $\|\p_t\bm
\xi_\mu\|_k$. Thus
  $\|( e^{-\mu\Delta}\bm u_0 , \varphi_\mu(g_0),\bm 0,\bm
0,\bm \eta_\mu-\p_t\bm \xi^\delta_\mu)\|_{\bm X^k}$ is  bounded
uniformly with respect to $\mu\in (0,1)$.  Hence, by  Theorem
\ref{1:T:pert}  and relations
(\ref{1:E:deltasahm})-(\ref{1:E:Nisahman}), a  solution $\RR (\bm
u_0,g_0,   0,0, P_{\bm E_N}(\bm \eta_\mu(\hat {\bm u},\hat g)))\in
~\bm Y^k$ exists for sufficiently large $N\ge1$ and   sufficiently
small $\delta, \mu>0$. Moreover,
$$ \sup _{(\hat {\bm u },\hat {g})\in \bm K}\|\RR_T
(\bm u_0,g_0,  0,0, P_{\bm E_N}(\bm  \eta_\mu(\hat {\bm u},\hat
g)-\p_t\bm \xi^\delta_\mu(\hat {\bm u},\hat g)) )-(\hat {\bm
u},\hat g) \|_{\bm H^k\times H^k}<\e.$$    From
(\ref{1:E:dzitaih22}) and   the constructions of $\bm u_{\mu}(\hat
u), g_{\mu}(\hat g)$, we have   $$\bm \xi _\mu~ :~(\hat{\bm
u},\hat g)\mapsto ~  \bm \xi _\mu (\hat{\bm u},\hat g),
\quad\p_t\bm \xi _\mu :(\hat{\bm u},\hat g)\rightarrow \bm \p_t\xi
_\mu (\hat{\bm u},\hat g)$$ are continuous from $\bm K$ to $\bm
L^2(J_T,\bm H^{k+1})$.
 Then
(\ref{1:e:ettaiban}) implies that mapping
$$P_{\bm E_N}(\bm \eta_{\mu}-\p_t\bm \xi^\delta_\mu)
(\cdot,\cdot):(\hat{\bm u},\hat g)\rightarrow P_{\bm E_N}(\bm
\eta_{\mu}(\cdot,\hat{\bm u},\hat g)-\p_t\bm
\xi^\delta_\mu(\hat{\bm u},\hat g))$$   is continuous from $\bm K$
to $\bm L^2(J_T,\bm H^k)$. The proof is complete.
\end{proof}

\subsection {Proof of Proposition \ref{1:P.2}} \label{1:S.4.2}
The proof of  Proposition \ref{1:P.2} is inspired by ideas from
\cite{agr1, agr2, shi1,shi2}.
 Let us  admit for the  moment the
following lemma.
\begin{lemma}\label{1:LL.reguly}
For any $(\bm u_0,g_0)\in \bm H^{k+2}\times H^{k+2}$,   for any
$\e>0$  and for any continuous mapping $\bm \Psi_1:\bm K
\rightarrow {\Theta}(\bm u_0,g_0,\bm f)\cap C^\infty (J_T,\bm
E_1)$ there is a   constant $\nu>0$ and a continuous mapping
$\bm{\Psi}:\bm K\rightarrow {\Theta}(\bm u_0,g_0,\bm f)\cap
C^\infty (J_T,\bm E)$ such that
\begin{eqnarray}
\sup_{(\hat{\bm u}, \hat g)\in \bm K}\|\RR_T( \tilde
{\bm{u}}_0,\tilde g_0,0,0,\bm \Psi(\bm{\hat u},\hat g))-\RR_T(
\tilde {\bm u}_0,\tilde g_0,0,0,\bm \Psi_1(\bm{\hat u},\hat
g))\|_{\bm H^k\times H^k}<\e\label{1:E..Hkpl2}
\end{eqnarray}
for any $(\tilde {\bm u}_0,\tilde g_0)\in \bm H^{k+2}\times
H^{k+2}$ with  $ \|\bm u_0-\tilde {\bm u}_0\|_{k}+\|g_0-\tilde
{g}_0\|_{k}<\nu$.
 \end{lemma}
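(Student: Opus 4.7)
The plan is to apply the Agrachev--Sarychev fast-oscillation trick to the auxiliary system (\ref{1:E0:eul11})--(\ref{1:E0:eulic1}): I will encode the ``missing'' $-\sum_i(\bm\zeta^i\cdot\nabla)\bm\zeta^i$ term that distinguishes an element of $\bm E_1$ from an element of $\bm E$ as the effective forcing produced by a rapidly oscillating $\bm E$-valued field $\bm\zeta_n$ in the convection term. Since $\bm E_1=\FF(\bm E)$ is a finite-dimensional vector space and the quadratic map $(\bm\eta,\bm\zeta^1,\dots,\bm\zeta^N)\mapsto \bm\eta-\sum_i(\bm\zeta^i\cdot\nabla)\bm\zeta^i$ from $\bm E^{N+1}$ to $\bm E_1$ is surjective for $N$ large (one handles negative coefficients via $2\alpha\beta(\zeta\cdot\nabla)\zeta=((\alpha+\beta)\zeta\cdot\nabla)((\alpha+\beta)\zeta)-(\alpha\zeta\cdot\nabla)(\alpha\zeta)-(\beta\zeta\cdot\nabla)(\beta\zeta)$), a continuous section exists on the compact image $\bm\Psi_1(\bm K)$, yielding continuous selections $\bm\eta(\hat{\bm u},\hat g),\bm\zeta^i(\hat{\bm u},\hat g)\in C^\infty(J_T,\bm E)$ with $\bm\Psi_1=\bm\eta-\sum_i(\bm\zeta^i\cdot\nabla)\bm\zeta^i$. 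Then I choose scalar bumps $\chi_i^n\in C^\infty_0((0,T))$ with pairwise disjoint supports and $\chi_i^n\rightharpoonup 0$, $(\chi_i^n)^2\rightharpoonup 1$ in $L^2(J_T)$, set $\bm\zeta_n(t;\hat{\bm u},\hat g):=\sum_i\chi_i^n(t)\bm\zeta^i(t;\hat{\bm u},\hat g)$, and define the candidate $\bm\Psi(\hat{\bm u},\hat g):=\bm\eta(\hat{\bm u},\hat g)+\p_t\bm\zeta_n(\hat{\bm u},\hat g)\in C^\infty(J_T,\bm E)$.

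Because $\bm\zeta_n(0;\cdot)=\bm\zeta_n(T;\cdot)=0$ and $\bm\xi=\bm\zeta_n$, the change of variables $\bm v=\bm u+\bm\zeta_n$ in (\ref{1:E0:eul11})--(\ref{1:E0:eulic1}) yields the identity $\RR_T(\tilde{\bm u}_0,\tilde g_0,0,0,\bm\Psi(\hat{\bm u},\hat g))=\RR_T(\tilde{\bm u}_0,\tilde g_0,\bm\zeta_n,\bm\zeta_n,\bm\eta(\hat{\bm u},\hat g))$, so the task reduces to showing $\RR_T(\tilde{\bm u}_0,\tilde g_0,\bm\zeta_n,\bm\zeta_n,\bm\eta)\to \RR_T(\tilde{\bm u}_0,\tilde g_0,0,0,\bm\Psi_1)$ in $\bm H^k\times H^k$. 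This will go in two passes. First, Theorem~\ref{1:T.te2} applied to $\bm U_n=(\tilde{\bm u}_0,\tilde g_0,\bm\zeta_n,\bm\zeta_n,\bm f+\bm\eta)$ removes the $\bm\xi$-field: the weak-vanishing hypothesis (\ref{1:EE.xihatk}) for $\bm\xi_n=\bm\zeta_n$ follows from $\chi_i^n\rightharpoonup 0$ together with the smoothness of the $\bm\zeta^i$ and of any equicontinuous test field, giving $\RR(\tilde{\bm u}_0,\tilde g_0,\bm\zeta_n,\bm\zeta_n,\bm\eta)-\RR(\tilde{\bm u}_0,\tilde g_0,\bm\zeta_n,0,\bm\eta)\to 0$ in $\bm Y^k$. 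Second, an energy estimate in the spirit of (\ref{1:EEE.rott14111})--(\ref{1:E:nortpah}) applied to the difference $\RR(\tilde{\bm u}_0,\tilde g_0,\bm\zeta_n,0,\bm\eta)-\RR(\tilde{\bm u}_0,\tilde g_0,0,0,\bm\Psi_1)$ exploits the disjoint-support identity $(\bm\zeta_n\cdot\nabla)\bm\zeta_n=\sum_i(\chi_i^n)^2(\bm\zeta^i\cdot\nabla)\bm\zeta^i$, whose time integrals against any smooth test field converge to those of $\sum_i(\bm\zeta^i\cdot\nabla)\bm\zeta^i$, whereas the cross terms $(\bm u\cdot\nabla)\bm\zeta_n+(\bm\zeta_n\cdot\nabla)\bm u$ are linear in $\bm\zeta_n$ and disappear in the weak limit.

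Uniformity over $\bm K$ is automatic from compactness and continuity of the selection $(\bm\eta,\bm\zeta^i)(\hat{\bm u},\hat g)$. For uniformity over $(\tilde{\bm u}_0,\tilde g_0)$ in a $\nu$-ball around $(\bm u_0,g_0)$ in $\bm H^k\times H^k$, I note that, because $(\bm u_0,g_0)\in \bm H^{k+2}\times H^{k+2}$ and $\bm\Psi_1(\hat{\bm u},\hat g)\in C^\infty(J_T,\bm E_1)$, all constructed data lie in a bounded subset of $\bm X^{k+1}$ independent of $(\hat{\bm u},\hat g)\in\bm K$; Theorem~\ref{1:T:pert}(i)--(ii) then furnishes a uniform existence radius $\delta>0$ and a uniform Lipschitz constant in $\bm X^{k-1}\to\bm Y^{k-1}$, which propagates the above convergence under a small $\bm H^k$-perturbation of the initial data. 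The main obstacle will be the second convergence step: one must secure existence of $\RR(\tilde{\bm u}_0,\tilde g_0,\bm\zeta_n,0,\bm\eta)$ on the whole of $J_T$ uniformly in $n$ (despite $\|\bm\zeta_n\|_{k+1}$ growing) and transfer the merely weak convergence of the oscillating quadratic forcing to strong convergence of solutions. This parallels the argument behind Theorem~\ref{1:T.te2} but with the oscillation now located in the momentum equation rather than the density equation, and requires a careful uniform-in-$n$ energy bound in which the quadratic oscillating term is integrated by parts against the Gronwall test function.
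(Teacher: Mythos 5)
Your overall scheme (an oscillating $\bm E$-valued field in the convection slot, the change of variables $\bm v=\bm u+\bm\zeta_n$ with $\bm\zeta_n$ vanishing at $t=0,T$, and Theorem \ref{1:T.te2} to deal with the $\bm\xi$-slot of the transport equation) is the one the paper follows, but two of the steps you take for granted are exactly where the work lies, and as stated they do not go through. First, the ``continuous section'' of the quadratic map $(\bm\eta,\bm\zeta^1,\dots,\bm\zeta^N)\mapsto\bm\eta-\sum_i(\bm\zeta^i\cdot\nabla)\bm\zeta^i$ over the compact set $\bm\Psi_1(\bm K)$ is asserted, not proved: the fibres of this map are not convex, so no selection theorem applies, and a surjective polynomial map between finite-dimensional spaces need not admit continuous local sections. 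The paper avoids needing any such section via Lemma \ref{1:L3.1}: $\bm\Psi_1$ is first approximated by piecewise-constant controls whose values are convex combinations, with \emph{continuous scalar} coefficients $c_{l,r}(\hat{\bm u},\hat g)$, of finitely many \emph{fixed} vectors $\bm\eta_1^l\in\bm E_1$, each decomposed once and for all in $\bm E$; continuity in $(\hat{\bm u},\hat g)$ then resides only in scalars, and the symmetric convexification identity (\ref{1:E.lemma1}) plus the reduction to a single time interval (Step~2 of the paper's proof) prepare the oscillation argument.

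Second, and more seriously, the convergence $\RR(\tilde{\bm u}_0,\tilde g_0,\bm\zeta_n,0,\bm\eta)\to\RR(\tilde{\bm u}_0,\tilde g_0,0,0,\bm\Psi_1)$, which you yourself flag as ``the main obstacle'', is the analytic heart of the lemma and is not a consequence of the tools you cite: Theorem \ref{1:T:pert} requires strong $\bm X^{k}$-closeness of the data, which fails because $\|\bm\zeta_n\|_{L^2(J_T,\bm H^{k+1})}$ does not tend to zero, while Theorem \ref{1:T.te2} handles only the $\bm\xi$-slot of the transport equation, where the perturbation enters \emph{linearly}; in the momentum equation the oscillation enters quadratically, and a Gronwall estimate on the difference cannot absorb terms that are merely weakly small in time. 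The paper's actual mechanism is the relaxed-control primitive $\KK\bm f_n(t)=\int_0^t\bm f_n\,\dd s$: since $\KK\bm f_n\to0$ in $C(J_T,\bm H^{k+1})$, the pair $(\bm u_1-\KK\bm f_n,\,g_1)$ is an \emph{exact} solution of a system whose data differ from $(\dots,\bm\zeta_n,0,\dots)$ by a strongly small amount, so Theorem \ref{1:T:pert} applies and gives (\ref{1:E:p161})--(\ref{1:E:p162}); only afterwards is Theorem \ref{1:T.te2} used to insert $\bm\xi=\bm\zeta_n$. This ordering also repairs the existence issue in your first pass: you invoke Theorem \ref{1:T.te2} with $\bm U_n=(\dots,\bm\zeta_n,\bm\zeta_n,\dots)$, but its hypothesis is that $\RR(\bm U_n)$ already exists on all of $J_T$, which is unknown at that stage; in the paper existence propagates in the opposite direction, from $(\bm u_1,g_1)$ through the $\KK\bm f_n$ perturbation to the $\bm\xi=0$ solution, and only then to $\bm\xi=\bm\zeta_n$. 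Until you supply a substitute for the $\KK\bm f_n$ device (or carry out the uniform-in-$n$ energy argument you only sketch), the proposal has a genuine gap.
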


Let $(\bm u_0,g_0)\in \bm H^{k}\times H^{k}$  and $\bm \Psi_1:\bm
K \rightarrow {\Theta}(\bm u_0,g_0,\bm f)\cap C^\infty (J_T,\bm
E_1)$ be such that\begin{eqnarray}\label{1:E.prop2ipaym}
\sup_{(\hat{\bm u},\hat g)\in \bm K} \|\RR_T(\bm u_0,g_0, 0,0,\bm
\Psi_1(\hat{\bm u}, \hat{g}))-(\hat{\bm u},\hat g)\|_{\bm
H^k\times H^k}<\frac{\e}{2}.
\end{eqnarray} Take
 any sequence $(\bm
u^n_0,g^n_0)\in \bm {H}^{k+2}\times H^ {k+2}$ such that $$ \|(\bm
u_0,g_0)-(\bm u^n_0,g^n_0)\|_{\bm H^k\times H^k}\rightarrow 0
\text{ as } n\rightarrow\infty.$$ As $\bm K$ is compact, Theorem
\ref{1:T:pert} implies that $\bm{\Psi}_1(\bm K)\subset{\Theta}(\bm
u^n_0,g^n_0,\bm f)$ for sufficiently large $n$. By Lemma
\ref{1:LL.reguly}, there is a continuous mapping
$$\bm{\Psi}:\bm K\rightarrow {\Theta}(\bm u^n_0,g^n_0,\bm f)\cap
C^\infty (J_T,\bm E) $$ such that
\begin{eqnarray}
\sup_{(\hat{\bm u}, \hat g)\in \bm K}\|\RR_T(\bm
u^n_0,g^n_0,0,0,\bm {\Psi}(\bm{\hat u},\hat g)) -\RR_T(\bm
u^n_0,g^n_0,0,0,\bm {\Psi}_1(\bm{\hat u},\hat g))\|_{\bm H^k\times
H^k}<\frac{\e}{2}.\nonumber
\end{eqnarray}
 Choosing $n $ sufficiently large and using the fact that $\RR$ is uniformly continuous on the compact set $\bm {\Psi}(\bm K)\cup\bm {\Psi}_1(\bm K)$, we get
\begin{align}\label{1:E3.un1sd1}
\sup_{(\hat{\bm u}, \hat g)\in \bm K}\|&\RR_T(\bm u_0,g_0,0,0,\bm
{\Psi}(\bm{\hat u},\hat g))- \RR_T(\bm u_0,g_0,0,0,\bm
\Psi_1(\bm{\hat u},\hat g))\|_{\bm H^k\times
H^k}\nonumber\\
\le&\sup_{(\hat{\bm u}, \hat g)\in \bm K}\| \RR_T(\bm
u_0,g_0,0,0,\bm {\Psi}(\bm{\hat u},\hat g))- \RR_T(\bm
u^n_0,g^n_0,0,0,\bm {\Psi}(\bm{\hat u},\hat g))\|_{\bm H^k\times
H^k} \nonumber\\ & +\sup_{(\hat{\bm u}, \hat g)\in \bm K}
\|\RR_T(\bm u^n_0,g^n_0,0,0,\bm {\Psi}_1(\bm{\hat u},\hat
g))-\RR_T(\bm u_0,g_0,0,0,\bm \Psi_1(\bm{\hat u},\hat g))\|_{\bm
H^k\times H^k}\nonumber\\& +\sup_{(\hat{\bm u}, \hat g)\in \bm K}
\|\RR_T(\bm u^n_0,g^n_0,0,0,\bm {\Psi}(\bm{\hat u},\hat g))-
\RR_T(\bm u^n_0,g^n_0,0,0,\bm {\Psi}_1(\bm{\hat u},\hat g))\|_{\bm
H^k\times H^k}\nonumber\\<&\frac{\e}{2}.
\end{align}
Combining (\ref{1:E.prop2ipaym}) and (\ref {1:E3.un1sd1}), we
complete the proof of  Proposition \ref{1:P.2}.

\begin {proof}[Proof of Lemma \ref{1:LL.reguly}]

  \vspace{6pt}  \textbf{Step 1.}
 We shall need
the following lemma, which    can be proved by literal repetition
of the arguments  of the proof of \cite[Lemma 3.5]{shi2}.
\begin {lemma}\label{1:L3.1} For any
continuous mapping $\bm\Psi_1:\bm K\rightarrow  \Theta(\bm
u_0,g_0,\bm f)\cap L^2(J_T,\bm E_1) $  there is a  set $\bm A =
\{\eta_1^l , l = 1,  \ldots ,m\} \subset~\bm E_1$ an integer
$s\ge1$ and a mapping $\bm {\Psi}_s:\bm K\rightarrow \Theta(\bm
u_0,g_0,\bm f)\cap L^2(J_T,\bm E_1)$ such that
$$ \bm {\Psi}_s(\hat{\bm u },\hat g)=\sum_{l=1}^m  \sum_{r=0}^{
s-1}c_{l,r}(\hat{\bm u },\hat g)I_{r,s}(t) \bm\eta_1^l,$$ where
$c_{l,r}$ are non-negative functions such that
$\sum_{l=1}^mc_{l,r}=1$,  $I_{r,s}$ is the indicator function of
the interval $[t_r,t_{r+1})$ with $t_r=rT/s$
 and
\begin{eqnarray}
\sup_{(\hat{\bm u},\hat g)\in \bm K} \|\RR_T(\bm u_0,g_0,0,0, \bm
{\Psi}_1(\hat{\bm u},\hat g))-\RR_T(\bm u_0,g_0, 0,0,\bm
{\Psi}_s(\hat{\bm u},\hat g))\|_{\bm H^k\times H^k}<\e.\nonumber
\end{eqnarray}
\end {lemma}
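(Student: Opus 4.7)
The plan is to approximate $\bm\Psi_1$ by a time-piecewise-constant mapping whose constant value on each subinterval can be written as a convex combination of a fixed finite subset of $\bm E_1$. I would first partition $J_T$ into $s$ equal subintervals $[t_r,t_{r+1})$ with $t_r=rT/s$ and define, for $t\in[t_r,t_{r+1})$,
\[
\bm\Psi^s(\hat{\bm u},\hat g)(t) := v_r(\hat{\bm u},\hat g),\qquad v_r(\hat{\bm u},\hat g):=\frac{s}{T}\int_{t_r}^{t_{r+1}}\bm\Psi_1(\hat{\bm u},\hat g)(\tau)\,\dd\tau.
\]
Each $v_r$ is continuous from $\bm K$ into $\bm E_1$. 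Since $\bm\Psi_1(\bm K)\subset L^2(J_T,\bm E_1)$ is compact, the standard $L^2$-approximation of functions by their subinterval averages gives $\bm\Psi^s\to\bm\Psi_1$ in $L^2(J_T,\bm E_1)$ uniformly in $(\hat{\bm u},\hat g)\in\bm K$ as $s\to\infty$ (a pointwise argument combined with a finite $\epsilon$-net in the compact set $\bm\Psi_1(\bm K)$).

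Next, I would use Theorem~\ref{1:T:pert} to transfer this $L^2$-approximation to the resolving operator. Because the perturbation radius $\delta>0$ in Theorem~\ref{1:T:pert}(i) depends only on the norm of the reference data, a finite sub-cover of the compact set $\bm\Psi_1(\bm K)$ by such $\delta$-balls provides a uniform perturbation threshold. Hence, for $s$ sufficiently large, $\bm\Psi^s(\hat{\bm u},\hat g)\in\Theta(\bm u_0,g_0,\bm f)$ for every $(\hat{\bm u},\hat g)\in\bm K$, and the continuity of $\RR$ (Theorem~\ref{1:T:pert}(iii)) yields
\[
\sup_{(\hat{\bm u},\hat g)\in\bm K}\|\RR_T(\bm u_0,g_0,0,0,\bm\Psi_1(\hat{\bm u},\hat g))-\RR_T(\bm u_0,g_0,0,0,\bm\Psi^s(\hat{\bm u},\hat g))\|_{\bm H^k\times H^k}<\e.
\]

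It remains to rewrite $\bm\Psi^s$ in the required convex form. The set $\bigcup_{r=0}^{s-1}v_r(\bm K)$ is compact, hence bounded, in the finite-dimensional space $\bm E_1$. I would then choose affinely independent vectors $\eta_1^1,\ldots,\eta_1^m\in\bm E_1$ (with $m=\dim\bm E_1+1$) whose affine simplex $\Sigma$ contains this bounded set. The barycentric coordinates $\lambda_1(v),\ldots,\lambda_m(v)$ on $\Sigma$ are continuous, non-negative, satisfy $\sum_l\lambda_l(v)=1$, and $v=\sum_l\lambda_l(v)\eta_1^l$. Setting $c_{l,r}(\hat{\bm u},\hat g):=\lambda_l(v_r(\hat{\bm u},\hat g))$ gives continuous coefficients with the required properties, and by construction
\[
\sum_{l=1}^{m}\sum_{r=0}^{s-1}c_{l,r}(\hat{\bm u},\hat g)\,I_{r,s}(t)\,\eta_1^l=\bm\Psi^s(\hat{\bm u},\hat g)(t),
\]
which is the required $\bm\Psi_s(\hat{\bm u},\hat g)(t)$.

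The main anticipated obstacle is the uniform solvability step: showing $\bm\Psi^s(\hat{\bm u},\hat g)\in\Theta(\bm u_0,g_0,\bm f)$ for all $(\hat{\bm u},\hat g)\in\bm K$ simultaneously. A pointwise application of Theorem~\ref{1:T:pert}(i) only yields a perturbation radius depending on each individual $\bm\Psi_1(\hat{\bm u},\hat g)$, and it is the compactness of $\bm K$ (and the observation emphasized in the statement of Theorem~\ref{1:T:pert} that $\delta$ depends only on the norm of the reference) that allows one to extract a uniform threshold via a finite sub-cover. This compactness-plus-uniform-perturbation mechanism, together with the barycentric construction above, is the scheme of \cite[Lemma~3.5]{shi2} referenced by the authors.
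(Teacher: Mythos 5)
Your proposal is correct, and it follows essentially the approach the paper itself relies on: the paper gives no argument for this lemma, delegating it to a literal repetition of the proof of \cite[Lemma 3.5]{shi2}, whose scheme is exactly yours — approximate $\bm\Psi_1$ uniformly on the compact set $\bm K$ by controls piecewise constant in time with values in $\bm E_1$ written as convex combinations of a fixed finite family, then transfer the $L^2$-closeness of controls to closeness of the $\RR_T$-values via Theorem \ref{1:T:pert} and a compactness argument. The only genuine difference is how the fixed set $\{\bm\eta_1^l\}$ and the coefficients $c_{l,r}$ are produced: you take the interval averages $v_r$ and the barycentric coordinates of a simplex in the finite-dimensional space $\bm E_1$ enclosing $\bigcup_r v_r(\bm K)$, whereas the cited argument obtains the same form from a finite net of the compact image $\bm\Psi_1(\bm K)$ together with a subordinate partition of unity on $\bm K$; both give continuous, non-negative coefficients summing to one, so nothing is lost or gained beyond taste. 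One point worth making explicit in a final write-up: the uniform estimate $\sup_{(\hat{\bm u},\hat g)\in\bm K}\|\RR_T(\bm u_0,g_0,0,0,\bm\Psi_1)-\RR_T(\bm u_0,g_0,0,0,\bm\Psi^s)\|_{\bm H^k\times H^k}<\e$ does not follow from continuity of $\RR$ at each individual datum alone; it needs the same finite-cover mechanism you invoke for the uniform solvability threshold (choose the radius of each ball of the cover of $\bm\Psi_1(\bm K)$ according to the continuity modulus of $\RR$ at its centre, and take $s$ large enough that the averaging error is below the smallest such radius), which is implicit in your closing paragraph but should be spelled out.
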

Let $\bm {\Psi}_s$ be the function constructed in Lemma
\ref{1:L3.1}:
$$ \bm {\Psi}_s(\hat{\bm u },\hat g)=\sum_{l=1}^m\ph_l(t,\hat{\bm u },\hat g)\bm\eta_1^l.$$
  We claim that   there are vectors  $\bm\zeta^{l,1}, \ldots ,
\bm\zeta^{l,2n}, \bm\eta^{l} \in \bm E$ and positive constants
$\lambda_{l,1}, \ldots , \lambda_{l,2n}$ whose sum is equal to 1
such that
\begin{align}\label{1:E.lemma1}
& \bm\zeta^{i}=-\bm\zeta^{i+n} \text{ for }
 i=1,\ldots,n,\nonumber\\
&(\bm u\cdot \nabla)\bm u-\bm\eta_1^l= \sum_{j=1}^{2n}
\lambda_{l,j}
 ((\bm u+\bm\zeta^{l,j})\cdot \nabla)(\bm u+\bm\zeta^{l,j}) -\bm\eta^{l}\text{ for any }\bm u \in \bm H^1.
\end{align}
 Indeed, by the definition of $\FF(\bm E)$,
for any $\bm \eta_1^l\in \FF(\bm E)$ there are $\bm
\xi^{l,1},\ldots ,\bm \xi^{l,n},\bm \eta^l \in\bm  E$   such that
$$
\bm \eta^l_1=\bm \eta^l-\sum_{i=1}^n (\bm \xi^{l,i}\cdot\nabla\bm
\xi^{l,i}).$$ Let us set
$$
\lambda^{l,i} = \lambda^{l,i+n} = \frac {1}{2n},
\quad\bm\zeta^{l,i} = -\bm\zeta^{l,i+n} = \sqrt{n}\bm\xi^i, \quad
i = 1, \ldots ,n.
$$
Then (\ref{1:E.lemma1}) holds for any $\bm u \in \bm H^1$.

Let $(\bm u_1,g_1)=\RR(\bm u_0,g_0,0,0,\bm {\Psi}_s(\hat{\bm
u},\hat g))$. It follows from (\ref{1:E.lemma1})  that $(\bm
u_1,g_1)$ satisfies the problem
\begin{align}
\dot{\bm u}_1+\sum_{j=1}^{2n}\sum_{l=1}^m
\lambda_{l,j}\ph_l(t,\hat{\bm u},\hat
 g) &((\bm u_1+\bm\zeta^{l,j})\cdot
\nabla)(\bm u_1+\bm\zeta^{l,j})+h(g_1)\nabla g_1
 \nonumber\\& =\bm f(t)+\sum_{l=1}^m\ph_l(t,\hat{\bm u},\hat
 g)\bm\eta^l,\label{1:E3.aftleem1} \\
(\p_t+\bm{u_1}\cdot\nabla)g_1+\nabla\cdot \bm{u_1}&=0.\nonumber
\end{align}
  Taking $q=m\cdot n,$ $\{\bm\zeta^{i} \}_{i=1}^{q}
:=\{\bm\zeta^{l,j}\}_{l=1}^{m},_{j=1}^{n}$,
$\bm\zeta^{i+q}:=-\bm\zeta^{i} $, $ i=1,\ldots,q$, we rewrite
(\ref{1:E3.aftleem1})  in the form
\begin{align}\label{1:E3.aftleem3}
\dot{\bm u}_1&+\sum_{i=1}^{2q} \psi_i(t,\hat{\bm u},\hat g) ((\bm
u_1+\bm\zeta^{i})\cdot \nabla)(\bm u_1+\bm\zeta^{i})+h(g_1)\nabla
g_1 =\bm f(t)+\bm\eta (t,\hat{\bm u},\bm\hat g),
\end{align}
where
\begin{align}
&\bm\eta(t,\hat{\bm u},\hat g)=\sum_{l=1}^m\ph_l(t,\hat{\bm
u},\hat g)\bm\eta^l,\nonumber\\&
 \psi_i(t,\hat{\bm u}, \hat g
)=\sum_{r=0}^{s-1} d_{i,r}(\hat{\bm u},\hat g )I_{r,s}(t),
\end{align}
and $d_{i,r}\in C(\bm K)$ are  some non-negative functions such
that $$ \sum_ {i=1}^{q}d_{i,r}=\sum_
{i=q+1}^{2q}d_{i,r}=\frac{1}{2}.$$

  \vspace{6pt} \textbf{Step 2.}
   Let us show that it suffices to consider the case $s=1$.
Indeed, let us assume that for any constant $\e_0>0$ and for any
interval $I_r:=[t_{r-1},t_{r}]$ there exists  a continuous mapping
$\bm{\Psi}_{\e_0}^r:\bm K\rightarrow {\Theta}^r(\bm
u_1(t_{r-1}),g_1(t_{r-1}))\cap C^\infty (J_T,\bm E)$ such that
\begin{eqnarray}
\sup_{(\hat{\bm u}, \hat g)\in \bm K}\|\RR_{t_{r}-t_{r-1}} (\bm
u_1(t_{r-1}),g_1(t_{r-1}),0,0,\bm \Psi_{\e_0}^{r}(\bm{\hat u},\hat
g))-(\bm u_1(t_{r}),g_1(t_{r})) \|_{\bm H^k\times
H^k}<\e_0.\nonumber
\end{eqnarray}
 Here ${\Theta}^r(\bm u_1(t_{r-1}),g_1(t_{r-1}))$ is the set of functions  $\bm \eta \in L^2(I_r, \bm H^k)$ for which
problem (\ref{1:E0c:eul1})-(\ref{1:E0c:eul2}) has a solution in $
C(I_r,\bm{H}^k)\times C(I_r,H^k)$ satisfying the initial condition
$$ \bm u(t_{r-1})=\bm u_1(t_{r-1}),\,\,g(t_{r-1})=g_1(t_{r-1}).$$
In view of Theorem \ref{1:T:pert}, there is   $\delta_s>0$ such
that for any $(\tilde {\bm u}_0,\tilde g_0)\in \bm H^{k+2}\times
H^{k+2} $ with $\|(\tilde {\bm u}_0,\tilde g_0)-(\bm
u_1(t_{s-1}),g_1(t_{s-1}))\|_{\bm H^{k}\times H^{k}}<\delta_s$ we
have the   inequality
 \begin{eqnarray}
\sup_{(\hat{\bm u}, \hat g)\in \bm K}\|\RR_{T-t_{s-1}} (\tilde
{\bm u}_0,\tilde g_0,0,0,\bm \Psi_\e^{s}(\bm{\hat u},\hat g))-(\bm
u_1(T),g_1(T)) \|_{\bm H^k\times H^k}<\e.\nonumber
\end{eqnarray}
Similarly, we can find $\delta_r>0$, $r=s-1,\dots,1$ such that
 \begin{eqnarray}
\sup_{(\hat{\bm u}, \hat g)\in \bm K}\|\RR_{t_{r+1}-t_{r}} (\tilde
{\bm u}_0,\tilde g_0,0,0,\bm \Psi_{\delta_{r+1}}^{r}(\bm{\hat
u},\hat g))-(\bm u_1(t_{r+1}),g_1(t_{r+1})) \|_{\bm H^k\times
H^k}<\delta_{r+1}\nonumber
\end{eqnarray}
  for any $(\tilde {\bm u}_0,\tilde g_0)\in \bm H^{k+2}\times H^{k+2} $ satisfying $$\|(\tilde {\bm u}_0,\tilde g_0)-(\bm u_1(t_r),g_1(t_r))\|_{\bm H^{k}\times H^{k}}<\delta_r.$$
Let us denote by $\hat{ \bm \Psi}: K\rightarrow L^2(J_T,\bm E)$
the continuous operator defined by the relations
\begin{align}
\hat {\bm \Psi}(\bm{\hat u},\hat g)(t)= \bm
\Psi_{\delta_{r+1}}^r(\bm{\hat u},\hat g)(t)   \text { for }  t\in
I_r, \nonumber
\end{align}
where $\delta_{s+1}=\e$. Then
\begin{eqnarray}
\sup_{(\hat{\bm u}, \hat g)\in \bm K}\|\RR_{T}( \tilde
{\bm{u}}_0,\tilde g_0,0,0,\hat{\bm \Psi}(\bm{\hat u},\hat g))-(\bm
u_1(T),g_1(T)) \|_{\bm H^k\times H^k}<\e.\nonumber
\end{eqnarray}
 To complete the proof,  it suffices to approximate  $\hat{\bm \Psi}$     in $L^2(J_T, \bm H^k) $  by a continuous mapping  $\bm{\Psi}:\bm
K\rightarrow {\Theta}(\bm u_0,g_0)\cap C^\infty (J_T,\bm E)$.

  \vspace{6pt} \textbf{Step 3.}    We now assume that  $s=1$. Then  (\ref{1:E3.aftleem3})
  takes the form
\begin{align}\label{1:E3.case1}
\dot{\bm u}_1&+\sum_{i=1}^{2q} d_i(\hat{\bm u},\hat g) ((\bm
u_1+\bm\zeta^{i})\cdot \nabla)(\bm u_1+\bm\zeta^{i})+h(g_1)\nabla
g_1 =\bm f(t)+\bm\eta (\hat{\bm u}, \hat g),
\end{align}
where $d_i\in C(\bm K)$ and $\bm\eta\in C(\bm K,\bm E) $.    For
any $n\in \N$, let $\bm\zeta_n(t,\hat{\bm u}, \hat
g)=\bm\zeta(\frac {nt}{T},\hat{\bm u}, \hat g)$, where
$\bm\zeta(t,\hat{\bm u}, \hat {g})$  is a $1$-periodic function
such that
$$\bm\zeta(s,\hat{\bm u}, \hat g)=\bm\zeta^j\text{ for }0\le
s-(d_1(\hat{\bm u}, \hat g)+\ldots+d_{j-1}(\hat{\bm u}, \hat
g))<d_j(\hat{\bm u}, \hat g),\quad j=1,\ldots ,q.$$ Note that
$\bm\zeta(t,\hat{\bm u}, \hat g)=-\bm\zeta(t-\frac{1}{2},\hat{\bm
u}, \hat g) $ for $t\in(\frac{1}{2},1)$. Eq.~(\ref{1:E3.case1}) is
equivalent to
\begin{align}
\dot {\bm u}_1  & + ((\bm u_1 + \bm\zeta_n(t,\hat{\bm u}, \hat
g))\cdot\nabla)(\bm u_1 + \bm\zeta_n(t,\hat{\bm u}, \hat
g))+h(g_1)\nabla g_1 \nonumber\\& = \bm f  + \bm\eta(t,\hat{\bm
u}, \hat g) + \bm {f_n}(t,\hat{\bm u}, \hat g),\nonumber
\end{align}
where
\begin{align}
\bm {f_n}(t,\hat{\bm u},\hat g)&= ((\bm u_1 +
\bm\zeta_n(t,\hat{\bm u}, \hat g))\cdot\nabla)(\bm u_1 +
\bm\zeta_n(t,\hat{\bm u}, \hat g)) \nonumber\\&-\sum_{i=1}^{2q}
d_i(\hat{\bm u}, \hat g) ((\bm u_1+\bm\zeta^{i})\cdot \nabla)(\bm
u_1+\bm\zeta^{i}).\nonumber
\end{align}
 Let us define
$$\KK  \bm {f_n}(t)=\int_0^t   \bm {f_n}(s)\dd s.$$
Then $\bm v_n = \bm u_1 - \KK \bm {f_n} $ is a solution of the
problem
\begin{align}
\dot {\bm v}_n + (({\bm v}_n + \bm{\zeta_n}(t,\hat{\bm u}, \hat
g)+ \KK \bm{f_n}(t,\hat{\bm u}, \hat g))\cdot \nabla)(\bm v_n +
\bm\zeta_n (t,\hat{\bm u}, &\hat g)+  \KK \bm{f_n}(t,\hat{\bm u},
\hat g))\nonumber\\+h(g_1)\nabla g_1 &= \bm f(t)
+ \bm\eta(t,\hat{\bm u}, \hat g),\nonumber\\
(\p_t+({\bm v}_n+\KK \bm{f_n}(t,\hat{\bm u}, \hat
g))\cdot\nabla)g_1+\nabla\cdot ({\bm v}_n+\KK \bm{f_n}(t,\hat{\bm
u}, \hat
g))&=0,\nonumber\\
\bm v_n& =\bm u_0.\nonumber
 \end{align}
 It is straightforward to see that
\begin{equation}
 \sup_{(\hat{\bm u}, \hat g)\in \bm K} \|\KK \bm f_n(t,\hat{\bm u}, \hat g)\|_{C(J_T,\bm H^{k+1})}\rightarrow 0,\nonumber
\end{equation}
(e.g. see  \cite[Chapter 3]{Jur} or \cite[Section 6]{hn}). Thus
\begin{equation} \label{1:E:p161}
  \|\bm v_n-\bm u_1\|_{C(J_T,\bm H^{k+1})}\rightarrow 0 \,\, \text{as} \,\,
  n\rightarrow \infty.
\end{equation}
On the other hand, Theorem \ref{1:T:pert} implies that
\begin{equation}\label{1:E:p162}
  \|(\bm v_n,g_1)-(\tilde{\bm u}_n,\tilde{g}_n)\|_{\bm Y^{k}}\rightarrow 0 \,\, \text{as} \,\,
  n\rightarrow \infty,
\end{equation}
where $(\tilde{\bm u}_n,\tilde{g}_n)$ satisfies the problem
\begin{align}
&\p_t {\tilde{\bm u}}_n+ ((\tilde{\bm u}_n + \bm\zeta_n
(t,\hat{\bm u}, \hat g))\cdot \nabla)(\tilde{\bm u}_n + \bm\zeta_n
(t,\hat{\bm u}, \hat g) )+h(\tilde{g}_n)\nabla \tilde{g}_n
 = \bm f(t)
+ \bm\eta(t,\hat{\bm u}, \hat g),\nonumber\\& (\p_t+\tilde{\bm
u}_n\cdot\nabla)\tilde{g}_n+\nabla\cdot \tilde{\bm
u}_n=0,\nonumber\\
&\tilde{\bm u}_n(0)=\tilde{\bm u}_0,\quad \tilde{g}_n(0)=\tilde
{g}_0.\nonumber
 \end{align}
We want to apply Theorem \ref{1:T.te2} to the above system. To
this end, let $\bm \chi_n:J_T \rightarrow\bm H^{k}$ be a uniformly
equicontinuous sequence  and let $t_0\in J_T$. Then
\begin{align} \int_0^{t_0}\bm\zeta_n(t)\cdot\bm
\chi_n(t)\dd t&=\int_0^{t_0}\bm\zeta(\frac{nt}{T})\cdot\bm
\chi_n(t)\dd t=\int_0^{\frac{nt_0}{T}}\bm\zeta(t)\cdot\bm
\chi_n(\frac{tT}{n})\frac{T}{n}\dd t\nonumber\\&=
\sum_{i=0}^{[\frac{nt_0}{T}]-1}\int_i^{i+1}\bm\zeta(t)\cdot\bm
\chi_n(\frac{tT}{n})\frac{T}{n}\dd
t+\int_{[\frac{nt_0}{T}]}^{\frac{nt_0}{T}}\bm\zeta(t)\cdot\bm
\chi_n(\frac{tT}{n})\frac{T}{n}\dd t.\label{1:E..hipotstug1}
\end{align} Using the construction of $\bm\zeta(t)$, we get
\begin{align}
\int_i^{i+1}\bm\zeta(t)\cdot\bm \chi_n(\frac{tT}{n})\dd
t&=\int_i^{i+\frac{1}{2}}\bm\zeta(t)\cdot\bm
\chi_n(\frac{tT}{n})\dd
t+\int_{i+\frac{1}{2}}^{i+1}-\bm\zeta(t-\frac{1}{2})\cdot\bm
\chi_n(\frac{tT}{n})\dd
t\nonumber\\&=\int_i^{i+\frac{1}{2}}\bm\zeta(t)\cdot\big(\bm
\chi_n(\frac{tT}{n})-\bm \chi_n(\frac{tT}{n}+\frac{T}{2n})\big)\dd
t.\nonumber
\end{align}
As $\bm \chi_n$ is  uniformly equicontinuous and $\bm\zeta$ is
bounded, we have
$$\sup_{t\in [0,n]}\|\bm\zeta(t)\cdot\big(\bm
\chi_n(\frac{tT}{n})-\bm
\chi_n(\frac{tT}{n}+\frac{T}{2n})\big)\|_k\rightarrow 0,\quad
n\rightarrow \infty.$$  The boundedness of $\bm\zeta\cdot\bm
\chi_n$ implies that the second term of the right-hand side of
(\ref{1:E..hipotstug1}) goes to zero. Thus
$$\int_0^{t_0}\bm\zeta_n(t)\cdot\bm \chi_n(t)\dd t\rightarrow 0   \text{ in $\bm H^k $} .$$
Using Theorem \ref{1:T.te2} and   limits (\ref{1:E:p161}),
(\ref{1:E:p162}), we get
\begin{equation}
 \sup_{(\hat{\bm u},\hat g)\in \bm K}\|\RR_T(\bm u_0,g_0,0,0,\bm\zeta_n ,\bm\zeta_n ,
 \bm\eta(\hat{\bm u}, \hat g))-(\bm u_1(T,\hat{\bm u}, \hat g),g_1(T,\hat{\bm u}, \hat g))\|_{\bm H^k\times H^k}<\e\nonumber
\end{equation}
for sufficiently large $n$. Let us take some functions $
{\bm\zeta}^m_n\in C^\infty(J_T,E)$ such that $ {\bm\zeta}_n^m(0)=
{\bm\zeta}_n^m(T)=0$ and
\begin{align}
\| {\bm\zeta}_n^m-{\bm\zeta}_n\|_{L^2(J_T,\bm H^{k+1})}\rightarrow
0 \text { as } m\rightarrow\infty .
\end{align}
Then Theorem \ref{1:T:pert} implies
\begin{equation}
 \sup_{(\hat{\bm u},\hat g)\in \bm K}\|\RR_T(\bm u_0,g_0,0,0,\bm\zeta_n ,\bm\zeta_n ,
 \bm\eta(\hat{\bm u}, \hat g))-\RR_T(\bm u_0,g_0,0,0,\bm\zeta^m_n ,\bm\zeta^m_n ,
 \bm\eta(\hat{\bm u}, \hat g))\|_{\bm H^k\times H^k}<\e.\nonumber
\end{equation}
For $m \gg1$, the operator
$${\Psi}:\bm K \rightarrow L^2(J_T,\bm E) ,\,\,\, (\hat{\bm u},\hat g)
\rightarrow \bm\eta(\hat{\bm u}, \hat g)-\p_t  {\bm\zeta}^m_n$$
satisfies
\begin{eqnarray}
\sup_{(\hat{\bm u}, \hat g)\in \bm K}\|\RR_T( {\bm{u}}_0,
g_0,0,0,\bm \Psi_1(\bm{\hat u},\hat g))-\RR_T( {\bm u}_0,
g_0,0,0,\bm {\Psi}(\bm{\hat u},\hat g))\|_{\bm H^k\times
H^k}<\e,\nonumber
\end{eqnarray}
which completes the proof.
\end {proof}


\bibliographystyle{plain}

\begin{thebibliography}{10}





\bibitem{agr1}
A.~Agrachev and A.~Sarychev.
\newblock {Navier--Stokes equations controllability by means of low modes
  forcing}.
\newblock {\em J. Math. Fluid Mech.}, 7:108--152, 2005.

\bibitem{agr2}
A.~Agrachev and A.~Sarychev.
\newblock {Controllability of 2D Euler and Navier--Stokes equations by
  degenerate forcing}.
\newblock {\em Comm. Math. Phys.}, 265(3):673--�697, 2006.

\bibitem{BDV}
H.~Beir{\~a}o~da Veiga.
\newblock Perturbation theorems for linear hyperbolic mixed problems and
  applications to the compressible {E}uler equations.
\newblock {\em Comm. Pure Appl. Math.}, 46(2):221--259, 1993.

\bibitem{cor}
J.-M. Coron.
\newblock {On the controllability of 2-D incompressible perfect fluids}.
\newblock {\em J. Math. Pures Appl.}, 75(2):155--188, 1996.

\bibitem{corfurs}
J.-M. Coron and A.~V. Fursikov.
\newblock Global exact controllability of the {$2$}{D} {N}avier-{S}tokes
  equations on a manifold without boundary.
\newblock {\em Russian J. Math. Phys.}, 4(4):429--448, 1996.

\bibitem{fpgi}
E.~Fern\'{a}ndez-Cara, S.~Guerrero, O.~Yu. Imanuvilov, and J.~P.
Puel.
\newblock {Local exact controllability of the Navier--Stokes system}.
\newblock {\em J. Math. Pures Appl.}, 83(12):1501--1542, 2004.

\bibitem{fuim}
A.~V. Fursikov and O.~Yu. Imanuvilov.
\newblock {Exact controllability of the Navier--Stokes and Boussinesq
  equations}.
\newblock {\em Russian Math. Surveys}, 54(3):93--146, 1999.

\bibitem{gla}
O.~Glass.
\newblock {Exact boundary controllability of 3-D Euler equation}.
\newblock {\em ESAIM Control Optim. Calc. Var.}, 5:1--44, 2000.

\bibitem{Gla2}
O.~Glass.
\newblock On the controllability of the 1-{D} isentropic {E}uler equation.
\newblock {\em J. Eur. Math. Soc. (JEMS)}, 9(3):427--486, 2007.


\bibitem{ima2001}
 O.~Yu. Imanuvilov.
\newblock  {Remarks on exact controllability for the {N}avier-{S}tokes
              equations}.
\newblock {\em ESAIM Control Optim. Calc. Var.},  6:39--72, 2001.


\bibitem{Jur}
V.~Jurdjevic.
\newblock {\em Geometric control theory}, volume~52 of {\em Cambridge Studies
  in Advanced Mathematics}.
\newblock Cambridge University Press, Cambridge, 1997.

\bibitem{Kato1}
T.~Kato.
\newblock The {C}auchy problem for quasi-linear symmetric hyperbolic systems.
\newblock {\em Arch. Rational Mech. Anal.}, 58(3):181--205, 1975.

\bibitem{LiRao}
T.~Li and B.~Rao.
\newblock Exact boundary controllability for quasi-linear hyperbolic systems.
\newblock {\em SIAM J. Control Optim.}, 41(6):1748--1755, 2003.

\bibitem{hn}
H.~Nersisyan.
\newblock {Controllability of 3D incompressible Euler equations by a
  finite-dimensional external force}.
\newblock {\em ESAIM Control Optim. Calc. Var.}, 16(3):677-- 694, 2010.

\bibitem{rod}
S.~S. Rodrigues.
\newblock {Navier--Stokes equation on the rectangle: controllability by means
  of low mode forcing}.
\newblock {\em J. Dyn. Control Syst.}, 12(4):517--562, 2006.

\bibitem{shi1}
A.~Shirikyan.
\newblock {Approximate controllability of three-dimensional Navier- Stokes
  equations}.
\newblock {\em Comm. Math. Phys.}, 266(1):123--151, 2006.

\bibitem{shi2}
A.~Shirikyan.
\newblock {Exact controllability in projections for three-dimensional
  Navier-Stokes equations}.
\newblock {\em Annales de l'IHP, Analyse Non Lin\'eaire}, 24:521--537, 2007.

\bibitem{shi3}
A.~Shirikyan.
\newblock {Euler equations are not exactly controllable by a finite-dimensional
  external force}.
\newblock {\em Physica D}, 237:1317--1323, 2008.

\bibitem{tey1}
M.~E. Taylor.
\newblock {Partial Differential Equations, III}.
\newblock {\em Springer-Verlag, New York}, 1996.

\end{thebibliography}

\end{document}